\title{Interacting particle systems and Yaglom limit approximation of diffusions with unbounded drift}
\author{Denis Villemonais\thanks{villemonais@cmap.polytechnique.fr,\newline \indent CMAP, \'Ecole Polytechnique, route de Saclay, 91128 Palaiseau, France}}
\DeclareMathSymbol{\minus}{\mathord}{operators}{"2D}
\newtheorem{theoreme}{Theorem}[section]
\newtheorem{proposition}[theoreme]{Proposition}
\newtheorem{remarque}{Remark}
\newtheorem{hypothese}{Hypothesis}
\begin{document}
\maketitle

\begin{abstract}

 We study the existence and the exponential ergodicity of a general interacting particle system, whose components are driven by independent diffusion processes with values in an open subset of $\mathds{R}^d$, $d\geq 1$. The interaction occurs when a particle hits the boundary: it jumps to a position chosen with respect to a probability measure depending on the position of the whole system.

Then we study the behavior of such a system when the number of particles goes to infinity. This leads us to an approximation method for the Yaglom limit of multi-dimensional diffusion processes with unbounded drift defined on an unbounded open set.
While most of known results on such limits are obtained by spectral theory arguments and are concerned with existence and uniqueness problems, our approximation method allows us to get numerical values
of quasi-stationary distributions, which find applications to many disciplines.
We end the paper with numerical illustrations of our approximation method for stochastic processes related to biological population models.

\end{abstract}

\noindent\textit{Key words : }diffusion process, interacting particle system, empirical process, quasi-stationary distribution, Yaglom limit.

\noindent\textit{MSC 2000 subject : }Primary 82C22, 65C50, 60K35; secondary 60J60

\section{Introduction}
\label{part1}

Let $D\subset\mathds{R}^d$ be an open set with a regular boundary (see Hypothesis \ref{hypD}). The first part of this paper is devoted to the study of interacting particle systems $(X^1,...,X^N)$, whose components $X^i$ evolve in $D$ as diffusion processes and jump when they hit the boundary $\partial D$.
More precisely, let $N\geq 2$ be the number of particles in our system. Let us consider $N$ independent $d$-dimensional Brownian motions $B^1,...,B^N$ and a jump measure ${\cal J}^{(N)}:\partial (D^N)\mapsto {\cal M}_1(D^N)$, where ${\cal M}_1(D^N)$ denotes the set of probability measures on $D^N$.
We build the interacting particle system $(X^1,...,X^N)$ with values in $D^N$ as follows. At the beginning, the particles $X^i$ evolve as independent diffusion processes with values in $D$ defined by
\begin{equation}
\label{int_e1}
 dX^{(i)}_t=dB^i_t+q^{(N)}_i(X^{(i)}_t)dt,\ X^{(i)}_0\in D,
\end{equation}
where $q^{(N)}_i$ is locally Lipschitz on $D$, such that the diffusion process doesn't explode in finite time. When a particle hits the boundary, say at time $\tau_1$, it jumps to a position chosen with respect to ${\cal J}^{(N)}(X^1_{\tau_1\minus},...,X^N_{\tau_n\minus})$. Then the particles evolve independently with respect to \eqref{int_e1} until one of them hits the boundary and so on. 
In the whole study, we require the jumping particle to be attracted away from the boundary by the other ones during the jump (in the sense of Hypothesis \ref{hyp1} on ${\cal J}^{(N)}$ in Section \ref{partA_1}). We emphasize the fact that the diffusion processes which drive the particles between the jumps can depend on the particles and their coefficients aren't necessarily bounded (see Hypothesis \ref{hypD}).
This construction is a generalization of the Fleming-Viot type model introduced in \cite{Burdzy2000} for Brownian particles and in \cite{Grigorescu2009a} for diffusion particles. Diffusions with jumps from the boundary have also been studied in \cite{Ben-Ari2009}, with a continuity condition on ${\cal J}^{(N)}$ that isn't required in our case, and in \cite{Grigorescu2007}, where fine properties of a Brownian motion with rebirth have been established.

In a first step, we show that the interacting particle system is well defined, which means that accumulation of jumps doesn't occur before the interacting particles system goes to infinity. Under additional conditions on $q_i^{(N)}$ and $D$, we prove that the interacting particle system doesn't reach infinity in finite time almost surely. 
In a second step, we give suitable conditions ensuring the system to be exponentially ergodic. The whole study is made possible thanks to a coupling between $(X^1,...,X^N)$ and a system of $N$ independent $1$-dimensional reflected diffusion processes. The coupling is built in Section \ref{partA_2}.

Assume that $D$ is bounded. For all $N\geq 2$, let ${\cal J}^{(N)}$ be a jump measure and $(q_i^{(N)})_{1\leq i\leq N}$ a family of drifts.  Assume that the conditions for existence and ergodicity of the interacting process are fulfilled for all $N\geq 2$. Let $M^N$ be its stationary distribution. We denote by ${\cal X}^N$ the associated empirical stationary distribution, which is defined by ${\cal X}^N=\frac{1}{N}\sum_{i=1}^N\delta_{x_i}$, where $(x_1,...,x_N)\in D^N$ is distributed following $M^N$. Under some bound assumptions on $(q_i^{(N)})_{1\leq i\leq N, 2\leq N}$ (see Hypothesis \ref{hypQ}), we prove in Section \ref{partA_3} that the family of random measures ${\cal X}^N$ is uniformly tight.

In Section \ref{partApproximation}, we study a particular case: $q^{(N)}_i=q$ doesn't depend on $i,N$ and 
\begin{equation}
 \label{int_e2}
 {\cal J}^{(N)}(x_1,...,x_N)=\frac{1}{N-1}\sum_{j\neq i}\delta_{x_j},\ x_i\in\partial D.
\end{equation}
It means that at each jump time, the jumping particle is sent to the position of a particle chosen uniformly between the $N-1$ remaining ones. In this situation, we identify the limit of the family of empirical stationary distributions $({\cal X}^N)_{N\geq 2}$.
This leads us to an approximation method of limiting conditional distributions of diffusion processes absorbed at the boundary of an open set of $\mathds{R}^d$, studied by Cattiaux and M\'el\'eard in \cite{Cattiaux2008} and defined as follows. Let $U_{\infty}\subset \mathds{R}^d$ be an open set and $\mathds{P}^{\infty}$ be the law of the diffusion process defined by the SDE
\begin{equation}
\label{int_e3}
 dX^{\infty}_t=dB_t+\nabla V(X^{\infty}_t)dt,\ X^{\infty}\in U_{\infty}
\end{equation}
and absorbed at the boundary $\partial U_{\infty}$. Here $B$ is a $d$-dimensional Brownian motion and $V\in C^2(U_{\infty},\mathds{R})$. We denote by $\tau_{\partial}$ the absorption time of the diffusion process \eqref{int_e3}. As proved in \cite{Cattiaux2008}, the limiting conditional distribution
\begin{equation}
 \label{intyag1}
 \nu_{\infty}=\lim_{t\rightarrow\infty}\mathds{P}^{\infty}_x\left( X^{\infty}_t\in . | t<\tau_{\partial}\right)
\end{equation}
exists and doesn't depend on $x\in U_{\infty}$, under suitable conditions which allow the drift $\nabla V$ and the set $U_{\infty}$ to not fulfill the conditions of Section \ref{partA} (see Hypothesis \ref{H3} in Section \ref{partApproximation}). This probability is called the Yaglom limit associated with $\mathds{P}^{\infty}$. It is a quasi-stationary distribution for the diffusion process \eqref{int_e3}, which means that $\mathds{P}^{\infty}_{\nu_{\infty}}(X^{\infty}_t\in dx| t<\tau_{\partial})=\nu_{\infty}$ for all $t\geq 0$. We refer to   \cite{Cattiaux2009, Kolb2010,Lladser2000} and references therein for existence or uniqueness results on quasi-stationary distributions in other settings.

Yaglom limits are an important tool in the theory of Markov processes with absorbing states, which are commonly used in stochastic models of
biological populations, epidemics, chemical reactions and market dynamics (see the bibliography \cite[Applications]{Polett}). Indeed, while the long time behavior of a recurrent Markov process is well described by its stationary distribution, the stationary distribution of an absorbed Markov process is concentrated on the absorbing states, which is of poor interest. In contrast, the limiting distribution of the process
conditioned to not being absorbed when it is observed can explain some complex behavior, as the mortality plateau at advanced ages (see \cite{Aalen2001} and \cite{Steinsaltz2004}), which leads to new applications of Markov processes with absorbing states in biology (see \cite{Li2009}). As stressed in \cite{NAaSELL2001}, such distributions are in most cases not explicitly computable. 
In \cite{Cattiaux2008}, the existence of the Yaglom limit is proved by spectral theory arguments, which doesn't allow us to get its explicit value. The main motivation of Section \ref{partApproximation} is to prove an approximation method of $\nu_{\infty}$, even when the drift $\nabla V$ and the domain $U_{\infty}$ don't fulfill the conditions of Section \ref{partA}.

The approximation method is based on a sequence of interacting particle systems defined with the jump measures \eqref{int_e2}, for all $N\geq 2$.
In the case of a Brownian motion absorbed  at the boundary of a bounded open set (\emph{i.e.} $q=0$), Burdzy \emph{et} \emph{al}. conjectured in \cite{Burdzy1996} that the unique limiting measure of the sequence $({\cal X}^N)_{N\in\mathds{N}}$ is the Yaglom limit $\nu_{\infty}$. This has been confirmed in the Brownian motion case (see \cite{Burdzy2000}, \cite{Grigorescu2004} and \cite{Lobus2009}) and proved in \cite{Ferrari2007} for some Markov processes defined on discrete spaces.
New difficulties arise from our case. For instance, the interacting particle process introduced above isn't necessarily well defined, since it doesn't fulfill the conditions of Section \ref{partA}. To avoid this difficulty, we introduce a cut-off of $U_{\infty}$ near its boundary.
More precisely, let $(U_m)_{m\geq 0}$ be an increasing family of regular bounded subsets of $U_{\infty}$, such that $\nabla V$ is bounded on each $U_{\infty}$ and such that $U_{\infty}=\bigcup_{m\geq 0} U_{\infty}$. We define an interacting particle process $(X^{m,1},...,X^{m,N})$ on each subset $U_m^N$, by setting $q^{(N)}_i=\nabla V$ and $D=U_{m}$ in \eqref{int_e1}.
 For all $m\geq 0$ and $N\geq 2$, $(X^{m,1},...,X^{m,N})$ is well defined and exponentially ergodic. Denoting by ${\cal X}^{m,N}$ its empirical stationary distribution, we prove that
\begin{equation*}
 \lim_{m\rightarrow\infty}\lim_{N\rightarrow\infty} {\cal X}^{m,N}=\nu_{\infty}.
\end{equation*}

We conclude in Section \ref{simulation} with some numerical illustrations of our method applied to the $1$-dimensional Wright-Fisher diffusion conditioned to be absorbed at $0$, to the Logistic Feller diffusion and to the $2$-dimensional stochastic Lotka-Volterra diffusion.

\section{A general interacting particle process with jumps from the boundary}
\label{partA}

\subsection{Construction of the interacting process}

Let $D$ be an open subset of $\mathds{R}^d$, $d\geq 1$.
Let $N\geq 2$ be fixed. For all $i\in\{1,...,N\}$, we denote by $\mathds{P}^i$ the law of the diffusion process $X^{(i)}$, which is defined on $D$ by
\begin{equation}
 \label{s11_e1b}
 dX^{(i)}_t=dB^i_t-q^{(N)}_i(X^{(i)}_t)dt,\ X^{(i)}_0=x^i\in D
\end{equation}
and is absorbed at the boundary $\partial D$. Here $B^1,...,B^N$ are $N$ independent $d$-dimensional Brownian motions and $q^{(N)}_i=(q^{(N)}_{i,1},...,q^{(N)}_{i,d})$ is locally Lipschitz. We assume that the process is absorbed in finite time almost surely and that it doesn't explode to infinity in finite time almost surely.

 The infinitesimal generator associated with the diffusion process \eqref{s11_e1b} will be denoted by ${\cal L}_i^{(N)}$, with
\begin{equation*}
 {\cal L}_i^{(N)}=\frac{1}{2}\sum_{j=1}^d{\frac{\partial^2}{\partial x_j^2}}-q^{(N)}_{i,j}\frac{\partial}{\partial x_j}
\end{equation*}
on its domain ${\cal D}_{{\cal L}_i^{(N)}}$.

For each $i\in\{1,...,N\}$, we set 
\begin{equation*}
\label{s11_e3}
{\cal D}_i=\{ (x_1,...,x_N)\in \partial (D^N),\ \text{such that}\ x_i\in\partial D,\ \text{and},\ \forall j\neq i,\ x_j\in D  \}.
\end{equation*}
We define a system of particles $(X^1,...,X^N)$ with values in $D^N$, which is c\`adl\`ag and whose components jump from $\bigcup_i {\cal D}_i$. Between the jumps, each particle evolves independently of the other ones with respect to $\mathds{P}^i$. 

Let ${\cal J}^{(N)}:\bigcup_{i=0}^N{\cal D}_i\rightarrow{\cal M}_1(D)$ be the jump measure, which associates a probability measure ${\cal J}^{(N)}(x_1,...,x_N)$ on $D$ to each point $(x_1,...,x_N)\in \bigcup_{i=1}^N{\cal D}_i$.
Let $(X^1_0,...,X^N_0)\in D^N$ be the starting point of the interacting particle process $(X^1,...,X^N)$, which is built as follows:
\begin{itemize}
 \item Each particle evolves following the SDE \eqref{s11_e1b}
 independently of the other ones, until one particle, say $X^{i_1}$, hits the boundary at a time which is denoted by $\tau_1$. 
On the one hand, we have $\tau_1>0$ almost surely, because each particle starts in $D$. On the other hand, the particle which hits the boundary at time $\tau_1$ is unique, because the particles evolves as independent It\^o's diffusion processes in $D$. It follows that $(X^1_{\tau_1\minus},...,X^N_{\tau_1\minus})$ belongs to ${\cal D}_{i_1}$.
 \item The position of $X^{i_1}$ at time $\tau_1$ is then chosen with respect to the probability measure ${\cal J}^{(N)}(X^1_{\tau_1\minus},...,X^N_{\tau_1\minus})$.
 \item At time $\tau_1$ and after proceeding to the jump, all the particles are in $D$. Then the particles evolve with respect to \eqref{s11_e1b} and independently of each other, until one of them, say $X^{i_2}$, hits the boundary, at a time which is denoted by $\tau_2$. As above, we have $\tau_1<\tau_2$ and $(X^1_{\tau_2\minus},...,X^N_{\tau_2\minus})\in{\cal D}_{i_2}$.
  \item The position of $X^{i_2}$ at time $\tau_2$ is then chosen with respect to the probability measure ${\cal J}^{(N)}(X^1_{\tau_2\minus},...,X^N_{\tau_2\minus})$.
  \item Then the particles evolve with law $\mathds{P}^{i}$ and independently of each other, and so on. 
\end{itemize}
The law of the interacting particle process with initial distribution $m\in{\cal M}_1(D^N)$ will be denoted by $P^{N}_m$, or by $P^{N}_x$ if $m=\delta_x$, with $x\in D^N$. The associated expectation will be denoted by $E^{N}_{m}$, or by $E_x$ if $m=\delta_x$.
For all $\beta>0$, we denote by $S_{\beta}=\inf\{t\geq 0,\ \|(X_1,...,X_N)\|\geq \beta\}$ the first exit time from $\{x\in D^N,\ \|x\|< \beta\}$. We set $S_{\infty}=\lim_{\beta\rightarrow\infty} S_{\beta}$.

The sequence of successive jumping particles is denoted by $(i_n)_{n\geq 1}$, and
\begin{equation*}
\label{s11_e5b}
0<\tau_1<\tau_2< ...
\end{equation*}
denotes the strictly increasing sequence of jumping times (which is well defined for all $n\geq 0$ since the process is supposed to be absorbed in finite time almost surely). Thanks to the non-explosion assumption on each $\mathds{P}^i$, we have $\tau_n<S_{\infty}$ for all $n\geq 1$ almost surely. We set $\tau_{\infty}=\lim_{n\rightarrow\infty}{\tau_n}\leq S_{\infty}$. The process described above isn't necessarily well defined for all $t\in[0,S_{\infty}[$, and we need more assumptions on $D$ and on the jump measure ${\cal J}^{(N)}$ to conclude that $\tau_{\infty}=S_{\infty}$ almost surely.

In the sequel, we denote by $\phi_D$ the Euclidean distance to the boundary $\partial D$:
\begin{equation*}
 \label{s11_e6}
 \phi_D(x)=\inf_{y\in\partial D} \|y-x\|_2,\ \text{for all}\ x\in D.
\end{equation*}
For all $r>0$, we define the collection of open subsets $D_r=\{x\in D,\ \phi_D(x)> r\}$. For all $\beta>0$, we set $B_{\beta}=\{x\in D, \|x\|<\beta\}$.

\begin{hypothese}
 \label{hypD}
There exists a neighborhood $U$ of $\partial D$ such that
 \begin{enumerate}
\item the distance $\phi_D$ is of class $C^2$ on U,
\item for all $\beta>0$,
\begin{equation*}
\inf_{x\in U\cap B_{\beta},\ i\in\{1,...,N\}} {\cal L}_i^{(N)} \phi_D(x) > -\infty.
\end{equation*}
\end{enumerate}
\end{hypothese}
In particular, Hypothesis \ref{hypD} implies
 \begin{equation}
 \label{s12_e4}
 \|\nabla \phi_D(x)\|_2=1,\ \forall x\in U.
 \end{equation}

\begin{remarque}\upshape
 For example, the first part of Hypothesis \ref{hypD} is fulfilled if $D$ is an open set whose boundary is of class $C^2$ (see \cite[Theorem 4.3]{Delfour2001}). It is also satisfied by the rectangle with rounded corner defined in Section \ref{partApproximation_3}.
\end{remarque}

The following assumption ensures that the jumping particle is attracted away from the boundary by the other ones.
 \begin{hypothese}
 \label{hyp1}
  There exists a non-decreasing continuous function $f^{(N)}:\mathds{R}_+\rightarrow\mathds{R}_+$ vanishing at $0$ and strictly increasing in a neighborhood of $0$ such that, $\forall i\in\{1,...,N\}$,
  \begin{equation*}
   \label{s11_e6b}
   \inf_{(x_1,...,x_N)\in{\cal D}_i}{\cal J}^{(N)}(x_1,...,x_N)(\{y\in D,\ \phi_D(y)\geq\min_{j\neq i}f^{(N)}(\phi_D(x_j)) \})\geq p_0^{(N)},
  \end{equation*}
$p_0^{(N)}>0$ is a positive constant.
 \end{hypothese}
Informally, $f^{(N)}(\phi_D)$ is a kind of distance from the boundary and we assume that at each jump time $\tau_n$, the probability of the event "the jump position $X^{i_n}_{\tau_n}$ is chosen farther from the boundary than at least one another particle" is bounded below by a positive constant $p_0^{(N)}$.

\begin{remarque}\upshape

Hypothesis \ref{hyp1} is very general and allows a lot of choices for ${\cal J}^{(N)}(x_1,...,x_N)$.
For instance, for all $\mu\in{\cal M}_1(D)$, one can find a compact set $K\subset D$ such that $\mu(K)>0$. Then ${\cal J}^{(N)}(x_1,...,x_N)=\mu$ fulfills the assumption with $p_0^{(N)}=\mu(K)$ and $f^{(N)}(\phi_D)=\phi_D\wedge d(K,\partial D)$.

Hypothesis \ref{hyp1} also includes the case studied by Grigorescu and Kang in \cite{Grigorescu2009a}, where
 \begin{equation*}
  \label{s11_e8}
  {\cal J}^{(N)}(x_1,...,x_N)= \sum_{j\neq i} p_{ij}(x_i) \delta_{x_j},\ \forall (x_1,...,x_N)\in {\cal D}_i.
 \end{equation*}
 with $\sum_{j\neq i} p_{ij}(x_i)=1$ and $\inf_{i\in\{1,...,N\},j\neq i, x_i\in \partial D} p_{ij}(x_i)>0$. 
In that case, the particle on the boundary jumps to one of the other ones, with positive weights. It yields that Hypothesis \ref{hyp1} is fulfilled with $p_0^{(N)}=1$ and $f^{(N)}(\phi_D)=\phi_D$.
In Section \ref{partApproximation}, we will focus on the particular case
 \begin{equation*}
  \label{s11_7}
  {\cal J}^{(N)}(x_1,...,x_N)=\frac{1}{N-1}\sum_{j=1,...,N,\ j\neq i}{\delta_{x_j}},\ \forall (x_1,...,x_N)\in{\cal D}_i.
 \end{equation*}
 That will lead us to an approximation method of the Yaglom limit \eqref{intyag1}.

Finally, given a jump measure ${\cal J}^{(N)}$ satisfying Hypothesis 2 (with $p_0^{(N)}$ and $f^{(N)}$), any $\sigma^{(N)}:\bigcup_{i=0}^N{\cal D}_i\rightarrow{\cal M}_1(D)$ and a constant $\alpha^{(N)}>0$, the jump measure
  \begin{equation*}
  {\cal J}_{\sigma}^{(N)}(x_1,...,x_N)=\alpha^{(N)}{\cal J}^{(N)}(x_1,...,x_N)+(1-\alpha^{(N)})\sigma^{(N)}(x_1,...,x_N),\ \forall (x_1,...,x_N)\in{\cal D}_i,
 \end{equation*}
 fulfills the Hypothesis \ref{hyp1} with $p_{0,\sigma}^{(N)}=\alpha^{(N)}p_0^{(N)}$ and $f^{(N)}_{\sigma}(\phi_D)=f^{(N)}(\phi_D)$.
\end{remarque}

Finally, we give a condition which ensures the exponential ergodicity of the process. In particular, this condition is satisfied if $D$ is bounded and fulfills Hypothesis \ref{hypD}.
\begin{hypothese}
\label{hypE}
There exists $\alpha>0$, $t_0^{(N)}>0$ and a compact set $K^{(N)}_0\subset D$ such that
 \begin{enumerate}
\item the distance $\phi_D$ is of class $C^2$ on $D\setminus D_{2\alpha}$ and
\begin{equation*}
\inf_{x\in D\setminus D_{2\alpha}, \ i\in\{1,...,N\}} {\cal L}_i^{(N)} \phi_D(x) > -\infty.
\end{equation*}
\item for all $i\in\{1,...,N\}$, we have
\begin{equation*}
p_1^{(N)}=\prod_{i=1}^N\inf_{x\in D_{\alpha/2}}\mathds{P}^{i}_x(X^{(i)}_{t^{(N)}_0}\in K^{(N)}_0)>0.
\end{equation*}
\end{enumerate}
\end{hypothese}

\begin{theoreme}
 \label{thA_1}
 Assume that Hypotheses \ref{hypD} and \ref{hyp1} are fulfilled. Then the process $(X^1,...,X^N)$ is well defined, which means that $\tau_{\infty}=S_{\infty}$ almost surely.

If Hypothesis \ref{hyp1} and the first point of Hypothesis \ref{hypE} are fulfilled, then $\tau_{\infty}=S_{\infty}=+\infty$ almost surely.

If Hypotheses \ref{hyp1} and \ref{hypE} are fulfilled, then the process $(X^1,...,X^N)$  is exponentially ergodic, which means that there exists a probability measure $M^N$ on $D^N$ such that,
\begin{equation*}
 \label{eq11}
 ||P^N_x((X^1_t,...,X^N_t)\in .)-M^N||_{TV}\leq C^{(N)}(x)\left(\rho^{(N)}\right)^t,\ \forall x\in D^N,\ \forall t\in\mathds{R}_+,
\end{equation*}
where $C^{(N)}(x)$ is finite, $\rho^{(N)}<1$ and $||.||_{TV}$ is the total variation norm. In particular, $M^N$ is a stationary measure for the process $(X^1,...,X^N)$.
\end{theoreme}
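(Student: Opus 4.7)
The plan relies on the coupling, constructed in Section \ref{partA_2}, between $(X^1,\ldots,X^N)$ and $N$ independent one-dimensional reflected diffusions $(Y^1,\ldots,Y^N)$ on $[0,+\infty)$. The coupling is designed so that, in a suitable sense, $\phi_D(X^i)$ is stochastically bounded below by $Y^i$. Between jumps, It\^o's formula applied on $U$ (using $\|\nabla\phi_D\|_2=1$ from \eqref{s12_e4} together with Hypothesis \ref{hypD}) shows that $\phi_D(X^i)$ behaves as a Brownian motion plus a drift bounded below, and hence stochastically dominates a reflected Brownian motion with constant drift. At a jump time $\tau_n$, Hypothesis \ref{hyp1} guarantees that with probability at least $p_0^{(N)}$ the new position satisfies $\phi_D(X^{i_n}_{\tau_n})\geq \min_{j\neq i_n}f^{(N)}(\phi_D(X^j_{\tau_n-}))$, which corresponds to an upward jump of the coupled reflected diffusion.

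For the first assertion ($\tau_\infty=S_\infty$ a.s.), I would fix $\beta,T>0$ and work on the event $\{S_\beta>T\}$, where Hypothesis \ref{hypD}(2) gives a uniform drift bound on $U\cap B_\beta$. If $P(\tau_\infty\leq T\wedge S_\beta)>0$, then by continuity between jumps the accumulation of jump times would force $\min_i\phi_D(X^i_{\tau_n-})\to 0$ along a subsequence; but the coupled reflected diffusions $Y^i$ have continuous paths and therefore cannot accumulate zeros on a finite interval, yielding a contradiction. Letting $\beta,T\to\infty$ then gives $\tau_\infty=S_\infty$ a.s.

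For the second assertion, the first point of Hypothesis \ref{hypE} strengthens Hypothesis \ref{hypD}(2) by removing its dependence on $\beta$: $\phi_D$ is $C^2$ on $D\setminus D_{2\alpha}$ and $\mathcal{L}_i^{(N)}\phi_D$ is uniformly bounded below. The previous argument therefore applies without the $S_\beta$-restriction and yields $\tau_\infty=+\infty$ a.s. Since the support of $\mathcal{J}^{(N)}$ is contained in $D$ and each individual diffusion $\mathds{P}^i$ is assumed not to explode, the process cannot blow up between jumps, so $S_\infty=+\infty$ a.s.

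For the third assertion I would invoke a Foster--Lyapunov / Harris-type theorem. The coupling supplies a Lyapunov function measuring proximity to $\partial D$ whose expected decay is controlled via the uniform drift bound in Hypothesis \ref{hypE}(1), while Hypothesis \ref{hypE}(2) yields a minorization on the small set $D_{\alpha/2}^N$: starting from any $x\in D_{\alpha/2}^N$, one uses the coupling to show that with probability bounded below no particle hits $\partial D$ before $t_0^{(N)}$, and then independence together with Hypothesis \ref{hypE}(2) forces all particles into $K_0^{(N)}$ at time $t_0^{(N)}$. Standard Meyn--Tweedie theory then produces a stationary measure $M^N$ and a geometric rate $\rho^{(N)}<1$. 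The principal technical obstacle is calibrating the coupling so that the complicated jumps of $\phi_D(X^i)$ (governed by $\mathcal{J}^{(N)}$ and depending on all particles) are dominated by the simple jumps of the one-dimensional reflected diffusions $Y^i$; once the coupling is in place, the remainder of the argument is classical.
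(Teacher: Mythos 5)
There is a genuine gap in your argument for the first (and central) assertion. The contradiction you invoke --- ``the coupled reflected diffusions $Y^i$ have continuous paths and therefore cannot accumulate zeros on a finite interval'' --- is false: the zero set of a reflected Brownian motion with bounded drift is a.s.\ a perfect, uncountable set, so a single $Y^i$ accumulates zeros constantly, and concluding that some $Y^{i_0}$ tends to $0$ as $t\to\tau_\infty$ is not contradictory at all. Moreover the observation that $\min_i\phi_D(X^i_{\tau_n-})\to 0$ is vacuous, since the jumping particle sits on the boundary at each $\tau_n-$, so this minimum is identically zero. What the paper actually does is: on $\{\tau_\infty<S_\beta\}$ a single particle $i_0$ must jump infinitely often; the shrinking inter-jump durations force the \emph{post-jump} distances $\phi_D(X^{i_0}_{\tau_n})\to 0$; then Hypothesis \ref{hyp1} is exploited in a Borel--Cantelli fashion (the conditional probability that the jump lands no farther from the boundary than some \emph{other} particle, in the $f^{(N)}$ scale, is at least $p_0^{(N)}$ at every jump) to produce a \emph{second} particle $j_0$ with $\phi_D(X^{j_0}_{\tau_n})\to 0$ along a subsequence. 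The contradiction is then that the two \emph{independent} coupled reflected diffusions $Y^{\beta,i_0}$ and $Y^{\beta,j_0}$ would hit $0$ at the same time $\tau_\infty$, an event of probability zero by Girsanov and the polarity of one-point sets for two-dimensional Brownian motion. This two-particle/polarity step is the essential idea of the proof and it is missing from your proposal; without it Hypothesis \ref{hyp1} plays no effective role. A further small misconception: the coupled processes $Y^i$ in the paper do not jump; the coupling inequality survives the jump times because the jumps of $\phi_D(X^i)$ are non-negative.

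Your treatment of the second assertion (rerun the argument with $\beta=\infty$, $a=\alpha$) and of the third (Down--Meyn--Tweedie with a minorization on the small set $(K_0^{(N)})^N$ from the second point of Hypothesis \ref{hypE}, plus a uniform lower bound on the probability of returning to that set obtained by first driving all coupled $Y^{\infty,i}$ above $\alpha/2$) are in line with the paper, though only sketched. But both rest on the well-posedness established in the first part, so the gap identified above must be repaired first.
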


The main tool of the proof is a coupling between $(X^1_t,...,X^N_t)_{t\in [0,S_{\beta}]}$ and a system of
$N$ independent one-dimensional diffusion processes $(Y^{\beta,1}_t,...,Y^{\beta,N}_t)_{t\in[0,S_{\beta}]}$, for each $\beta>0$. The system is built in order to satisfy
\begin{equation*}
\label{s11_e10}
 0\leq Y^{\beta,i}_t \leq \phi_D(X^i_t)\ \text{a.s.}
\end{equation*}
for all $t\in [0,\tau_{\infty}\wedge S_{\beta}]$ and each $i\in\{1,...,N\}$. We build this coupling in Subsection \ref{partA_1} and we conclude the proof of Theorem \ref{thA_1} in Subsection \ref{partA_2} .

In Subsection \ref{partA_3}, we assume that $D$ is bounded and that, for all $N\geq 2$, we're given ${\cal J}^{(N)}$ and a family of drifts $(q_i^{(N)})_{1\leq i\leq N}$, such that Hypotheses \ref{hypD}, \ref{hyp1}  and \ref{hypE} are fulfilled. Moreover, we assume that $\alpha$ in Hypothesis \ref{hypE} doesn't depend on $N$.
Under some suitable bounds on the family $(q_i^{(N)})_{1\leq i\leq N,\ N\geq 2}$, we prove that the family of empirical distributions $({\cal X}^N)_{N\geq 2}$ is uniformly tight. It means that, $\forall \epsilon\geq 0$, there exists a compact set $K\subset D$ such that $E({\cal X}^N(D\setminus K))\leq \epsilon$ for all $N\geq 2$. In particular, this implies that $({\cal X}^N)_{N\geq 2}$ is weakly compact, thanks to \cite{Jakubowski1988}. Let us recall that a sequence of random measures $(\gamma_N)_N$ on $D$ converges weakly to a random measure $\gamma$ on $D$, if $E(\gamma_N(f))$ converges to $E(\gamma(f))$ for all continuous bounded functions $f:D\rightarrow \mathds{R}$. This property will be crucial in Section \ref{partApproximation}.

\subsection{Coupling's construction}
\label{partA_1}

\begin{proposition}
\label{le1}
Assume that Hypothesis \ref{hypD} is fulfilled and fix $\beta>0$. Then there exists $a>0$, a $N$-dimensional Brownian motion $(W^1,...,W^N)$ and positive constants $Q_1,...,Q_N$ such that, for each $i\in \{1,...,N\}$, the reflected diffusion process with values in $[0,a]$ defined by the reflection equation  (cf. \cite{Chaleyat-Maurel1978})
\begin{equation}
 \label{s12_e1}
 Y^{\beta,i}_t=Y^{\beta,i}_0+W^i_t-Q_i t+L_t^{i,0}-L_t^{i,a},\ Y^{\beta,i}_0=\min(a,\phi_D(X^i_0))
\end{equation}
satisfies
\begin{equation}
 \label{s12_e2}
 0\leq Y^{\beta,i}_t \leq \phi_D(X^i_t)\wedge a\ \text{a.s.}
\end{equation}
for all $t\in[0,\tau_{\infty}\wedge S_{\beta}[$ (see Figure \ref{fig2dc}). In \eqref{s12_e1},  $L^{i,0}$ (resp. $L^{i,a}$) denotes the local time of $Y^{\beta,i}$ at $\{0\}$ (resp. $\{a\}$).
\end{proposition}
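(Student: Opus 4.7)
My plan is to build each coupling Brownian motion $W^i$ from the component of $B^i$ in the direction $\nabla\phi_D(X^i)$ while $X^i$ lies in the regular neighborhood $U$ of $\partial D$, and then to compare $Y^{\beta,i}$ with $\phi_D(X^i)\wedge a$ by a pathwise Yamada--Watanabe / Tanaka type argument.

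First I would select $a>0$ and the constants $Q_i$. Since $U$ is a neighborhood of $\partial D$ and $\overline{B_\beta}\cap\partial D$ is compact, I can choose $a>0$ small enough that $\{x\in B_\beta:\phi_D(x)\le 2a\}\subset U$; on this set Hypothesis~\ref{hypD} gives $\phi_D\in C^2$ with $\|\nabla\phi_D\|_2=1$, and provides constants $Q_i>0$ such that $\mathcal{L}_i^{(N)}\phi_D\ge -Q_i$ on $U\cap B_\beta$.

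Next I would construct $(W^1,\ldots,W^N)$. On a possibly enlarged probability space, I introduce auxiliary Brownian motions $\tilde B^1,\ldots,\tilde B^N$, mutually independent and independent of the $B^i$, and set
\begin{equation*}
W^i_t=\int_0^t\mathbf{1}_{X^i_s\in U}\,\nabla\phi_D(X^i_s)\cdot dB^i_s+\int_0^t\mathbf{1}_{X^i_s\notin U}\,d\tilde B^i_s.
\end{equation*}
Using $\|\nabla\phi_D\|_2=1$ on $U$, each $W^i$ is a continuous local martingale with $[W^i]_t=t$, and cross-variations $[W^i,W^j]$ vanish for $i\ne j$ by independence of the driving noises; L\'evy's characterization then yields that $(W^1,\ldots,W^N)$ is a standard $N$-dimensional Brownian motion. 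Existence and uniqueness of the reflected SDE~\eqref{s12_e1} on $[0,a]$ driven by $W^i$ is then classical \cite{Chaleyat-Maurel1978}.

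For the comparison itself I would apply It\^o's formula to $\phi_D(X^i_t)$ on the open set $\{\phi_D(X^i_t)<a\}\subset U$, giving $d\phi_D(X^i_t)=dW^i_t+\mathcal{L}_i^{(N)}\phi_D(X^i_t)\,dt$, to be compared with $dY^{\beta,i}_t=dW^i_t-Q_i\,dt+dL^{i,0}_t-dL^{i,a}_t$. The Brownian parts cancel, the drift difference $-Q_i-\mathcal{L}_i^{(N)}\phi_D(X^i_t)$ is non-positive, the local time $dL^{i,0}_t$ is carried on $\{Y^{\beta,i}=0\}$ where $Y^{\beta,i}\le\phi_D(X^i)\wedge a$ is trivial, and $-dL^{i,a}_t$ only pushes $Y^{\beta,i}$ down from $a$. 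Applying Tanaka's formula to $(Y^{\beta,i}_t-\phi_D(X^i_t)\wedge a)^+$ on each continuity interval shows that this positive part starts at $0$ and has non-positive dynamics on $\{Y^{\beta,i}>\phi_D(X^i)\wedge a\}$, so it stays at $0$. Across a jump time $\tau_n$, induction gives $Y^{\beta,i_n}_{\tau_n-}\le\phi_D(X^{i_n}_{\tau_n-})\wedge a=0$, hence $Y^{\beta,i_n}_{\tau_n-}=0\le\phi_D(X^{i_n}_{\tau_n})$, while for $j\ne i_n$ neither side jumps, so the inequality survives and extends to all $t\in[0,\tau_\infty\wedge S_\beta[$.

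The main obstacle I anticipate is orchestrating the three regimes, namely the interior It\^o regime $\phi_D(X^i_t)<a$, the cap at level $a$ (where $\phi_D(X^i)\wedge a$ is frozen while $Y^{\beta,i}$ keeps fluctuating and is pinned below $a$ by $L^{i,a}$), and the countable jump set $\{\tau_n\}$, into a single pathwise identity. I expect to resolve this by stopping at the successive entry and exit times of $\{\phi_D<a\}$, running the Tanaka comparison on the subintervals where $X^i_t\in U$, invoking the reflection $-L^{i,a}$ only on the intervals where $\phi_D(X^i_t)\ge a$ (so that $\phi_D(X^i_t)\wedge a=a\ge Y^{\beta,i}_t$ is then automatic), and using the jumps solely to reset the inequality at the start of the next subinterval.
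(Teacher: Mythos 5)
Your proposal is correct and follows essentially the same route as the paper: the same choice of $a$ (so that the band $\{\phi_D\le 2a\}\cap B_\beta$ lies in $U$), the same splicing of $W^i$ from $\int\nabla\phi_D(X^i)\cdot dB^i$ near the boundary and an independent Brownian motion elsewhere (the paper uses alternating stopping times rather than indicators, which is immaterial), the same constants $Q_i$ from Hypothesis \ref{hypD}, and a pathwise drift/local-time comparison. The only cosmetic difference is that the paper closes the comparison by a contradiction argument at the first crossing time rather than via Tanaka's formula applied to $(Y^{\beta,i}-\phi_D(X^i)\wedge a)^+$, but the structural ingredients (cancellation of the martingale parts, sign of the drift gap, supports of $L^{i,0}$ and $L^{i,a}$, positivity of the jumps of $\phi_D(X^i)$) are identical.
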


\begin{remarque}\upshape
If the first part of Hypothesis \ref{hypE} is fulfilled, then the proof remains valid with $\beta=\infty$ and $a=\alpha$ (where $\alpha>0$ is defined in Hypothesis \ref{hypE}). This leads us to a coupling between $X^i$ and $Y^{\infty,i}$, valid for all $t\in[0,\tau_{\infty}\wedge S_{\infty}[=[0,\tau_{\infty}[$.
\end{remarque}

\begin{figure}[htbp]
\begin{center}
\includegraphics[height=70mm]{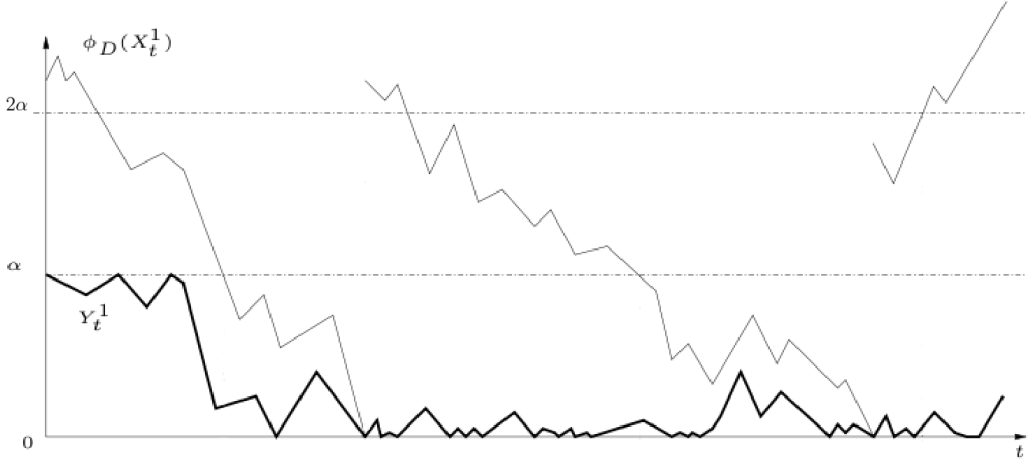}
\caption{The particle $X^1$ and its coupled reflected diffusion process $Y^1$}
\label{fig2dc}
\end{center}
\end{figure}

\begin{proof}[Proof of Proposition \ref{le1} :]
The set $\overline{B_{\beta}\setminus U}$ is a compact subset of $D$, then there exists $a>0$ such that $B_{\beta}\setminus U \subset D_{2a}$. In particular, we have $B_{\beta}\setminus D_{2a}\subset U$, so that $\phi_D$ is of class $C^2$ in $B_{\beta} \setminus D_{2a}$.

Fix $i\in\{1,...,N\}$.
 We define a sequence of stopping times $({\theta}^i_n)_n$ such that $X^i_t\in B_{\beta} \setminus D_{2 a}$ for all $t\in[{\theta}^i_{2n},{\theta}^i_{2n+1}[$ and $X^i_t\in \overline{D_{a}}$ for all $t\in[{\theta}^i_{2n+1},{\theta}^i_{2n+2}[$. More precisely, we set (see Figure \ref{fig2db})
 \begin{align*}
	{\theta}^i_0&=\inf{\{t\in[0,+\infty[,\ X^i_t\in B_{\beta} \setminus D_{a}\}}\wedge\tau_{\infty}\wedge S_{\beta},\\
	{\theta}^i_1&=\inf{\{t\in[t_0,+\infty[,\ X^i_t\in \overline{D_{2a}}\}}\wedge\tau_{\infty}\wedge S_{\beta},
\end{align*}
and, for $n\geq 1$,
\begin{align*}
	{\theta}^i_{2n}&=\inf{\{t\in[t^i_{2n-1},+\infty[,\ X^i_t\in B_{\beta} \setminus D_{a} \}}\wedge\tau_{\infty}\wedge S_{\beta},\\
	{\theta}^i_{2n+1}&=\inf{\{t\in[t^i_{2n},+\infty[,\ X^i_t\in \overline{D_{2a}}\}}\wedge\tau_{\infty}\wedge S_{\beta}.
\end{align*}
The sequence $({\theta}^i_{n})$ is non-decreasing and goes to $\tau_{\infty}\wedge S_{\beta}$ almost surely.

\begin{figure}[htbp]
\begin{center}
\includegraphics[height=70mm]{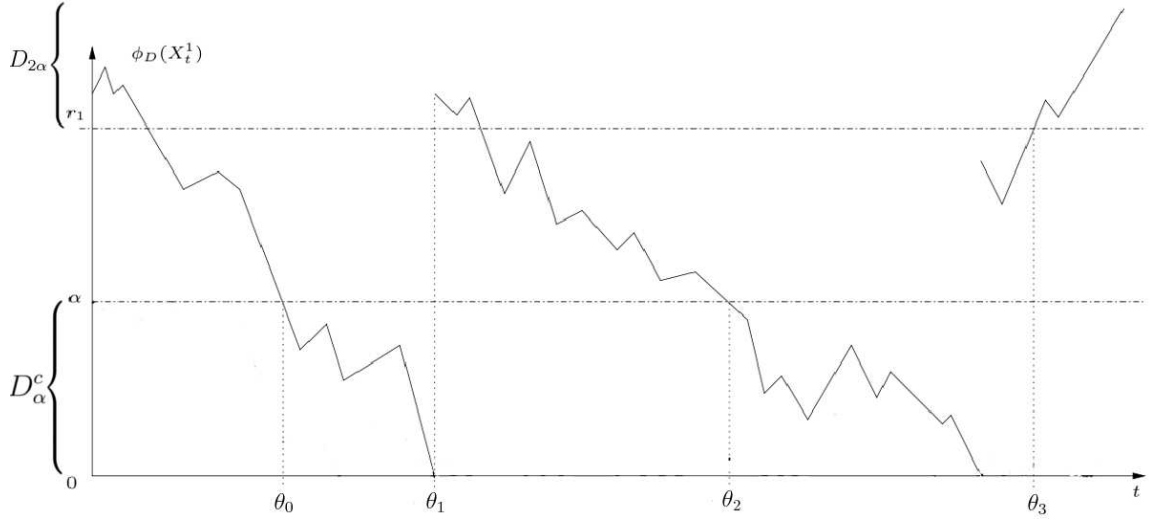}
\caption{Definition of the sequence of stopping times $({\theta}^i_n)_{n\geq0}$}
\label{fig2db}
\end{center}
\end{figure}

Let ${\gamma}^i$ be a $1$-dimensional Brownian motion independent of the process $(X^1,...,X^N)$ and of the Brownian motion $(B^1,...,B^N)$.
We set
\begin{equation*}
 W^i_t=\gamma^i_t,\ \text{for}\ t\in[0,{\theta}^i_0[,
\end{equation*}
and, for all $n\geq 0$,
\begin{align*}
 W^i_t&=W^i_{{\theta}^i_{2n}}+\ \int_{{\theta}^i_{2n}}^{t}{\nabla \phi_D(X^i_{s\minus})\cdot dB^i_s}\ \text{for}\ t\in[{\theta}^i_{2n},{\theta}^i_{2n+1}[,\\
 W^i_t&=W^i_{{\theta}^i_{2n+1}}+(\gamma^i_t-\gamma^i_{{\theta}^i_{2n+1}})\ \text{for}\ t\in[{\theta}^i_{2n+1},{\theta}^i_{2n+2}[,
\end{align*}
where $\int_{{\theta}^i_{2n}}^{t}{\nabla \phi_D(X^i_{s\minus})\cdot dB^i_s}$ has the law of a Brownian motion between times ${\theta}^i_{2n}$ and ${\theta}^i_{2n+1}$, thanks to \eqref{s12_e4}. The process $(W^1,...,W^N)$ is yet defined for all $t\in[0,\tau_{\infty}\wedge S_{\beta}[$. We set
\begin{equation*}
W^i_t=W^i_{\tau_{\infty}\wedge S_{\beta}-}+(\gamma^i_t-\gamma^i_{\tau_{\infty}\wedge S_{\beta}})\ \text{for}\ t\in [\tau_{\infty}\wedge S_{\beta},+\infty[
\end{equation*}
It is immediate that $(W^1,...,W^N)$ is a $N$-dimensional Brownian motion.

Fix $i\in\{1,...,N\}$. Thanks to Hypothesis \ref{hyp1}, there exists $Q^{(N)}_i\geq 0$ such that
\begin{equation*}
 \inf_{x\in B_{\beta}\setminus D_{2 a}}{{\cal L}_i^{(N)}\phi_D}(x)\geq -Q^{(N)}_i.
\end{equation*}
Let us prove that the reflected diffusion process $Y^{\beta,i}$ defined by \eqref{s12_e1} fulfills inequality \eqref{s12_e2} for all $t\in[0,\tau_{\infty}\wedge S_{\beta}[$.

We set $\zeta=\inf\left\{0\leq t<\tau_{\infty}\wedge S_{\beta}, Y^{\beta,i}_t > \phi_D(X^i_t)\right\}$ and we work conditionally to $\zeta<\tau_{\infty}\wedge S_{\beta}$.
By right continuity of the two processes,
\begin{equation*}
0<\phi_D(X^i_{\zeta})\leq Y^{\beta,i}_{\zeta} \leq a \ \text{a.s.} 
\end{equation*}
One can find a stopping time $\zeta'\in]\zeta,\tau_{\infty}\wedge S_{\beta}[$, such that $X^i$ doesn't jump between $\zeta$ and $\zeta'$ and such that $Y^{\beta,i}_t>0$ and $X^i_t\in B_{\beta}\setminus D_{2a}$ for all $t\in[\zeta,\zeta']$ almost surely.

Thanks to the regularity of $\phi_D$ on $B_{\beta}\setminus D_{2a}$, we can apply It\^o's formula to $(\phi_D(X^i_t))_{t\in[\zeta,\zeta']}$, and we get, for all stopping time $t\in[\zeta,\zeta']$,
\begin{align*}
 \phi_D(X^i_{t})&=\phi_D(X^i_{\zeta})+\int_{\zeta}^{t}{\nabla \phi_D(X^i_{s})\cdot dB^i_s}+\int_{\zeta}^{t}{{\cal L}_i^{(N)}\phi_D(X^i_{s})ds}.
 \end{align*}
But $\zeta$ and $\zeta'$ lie between an entry time of $X^i$ to $B_{\beta}\setminus D_a$ and the following entry time to $\overline{D_{2a}}$. It yields that there exists $n\geq 0$ such that $[\zeta,\zeta']\subset[{\theta}^i_{2n},{\theta}^i_{2n+1}[$. We deduce that
\begin{equation*}
\phi_D(X^i_t)-Y^{\beta,i}_t=\phi_D(X^i_{\zeta})-Y^{\beta,i}_{\zeta}+\int_{\zeta}^t{({\cal L}_i^{(N)}\phi_D(X^i_s)+Q^{(N)}_i)ds}-L^{i,0}_t+L^{i,0}_{\zeta}+L^{i,a}_t-L^{i,a}_{\zeta},
\end{equation*}
where ${\cal L}_i^{(N)}\phi_D(X^i_s)+Q^{(N)}_i\geq0$, $(L^{i,a}_s)_{s\geq 0}$ is increasing and $L^{i,0}_t=L^{i,0}_{\zeta}$, since $Y^{\beta,i}$ doesn't hit $0$ between times $\zeta$ and $t$. It follows that, for all $t\in[\zeta,\zeta']$,
\begin{align*}
\phi_D(X^i_t)-Y^{\beta,i}_t&\geq \phi_D(X^i_{\zeta})-Y^{\beta,i}_{\zeta}\\
                           &\geq \phi_D(X^i_{\zeta-})-Y^{\beta,i}_{\zeta-} \geq 0.
\end{align*}
where the second inequality comes from the positivity of the jumps of $\phi_D(X^i)$ and from the left continuity of $Y^{\beta,i}$, while the third inequality is due to the definition of $\zeta$.
Then $\phi_D(X^i)-Y^{\beta,i}$ stays non-negative between times $\zeta$ and $\zeta'$, what contradicts the definition of $\zeta$.
Finally, $\zeta=\tau_\infty\wedge S_{\beta}$ almost surely, which means that the coupling inequality \eqref{s12_e2} remains true for all $t\in[0,\tau_{\infty}\wedge S_{\beta}[$.
\end{proof}

\subsection{Proof of Theorem \ref{thA_1}}
\label{partA_2}
\begin{proof}[Proof that $(X^1,...,X^N)$ is well defined under Hypotheses \ref{hypD} and \ref{hyp1}]Let $N\geq2$ be the size of the interacting particle system and fix arbitrarily its starting point $x\in D^N$. Thanks to the non explosiveness of each diffusion process $\mathds{P}^i$, the interacting particle process can't escape to infinity in finite time after a finite number of jumps. It yields that $\tau_{\infty}\leq S_{\infty}$ almost surely.

 Fix $\beta>0$ such that $x\in B_{\beta}$ and define the event $C_{\beta}=\{\tau_{\infty}<S_{\beta}\}$. Assume that $C_{\beta}$ occurs with positive probability.
Conditionally to $C_{\beta}$, the total number of jumps is equal to $+\infty$ before the finite time $\tau_{\infty}$. There is a finite number of particles, then at least one particle makes an infinite number of jumps before $\tau_{\infty}$. We denote it by $i_0$ (which is a random index).

For each jumping time $\tau_n$, we denote by $\sigma_n^{i_0}$ the next jumping time of $i_0$, with $\tau_n<\sigma_n^{i_0}<\tau_{\infty}$. Conditionally to $C_{\beta}$, we get $\sigma_n^{i_0}-\tau_n\rightarrow 0$ when $n\rightarrow\infty$. For all $C^2$ function $f$ with compact support in $]0,2a[$, the process $f(\phi_D(X^{i_0}))$ is a continuous diffusion process with bounded coefficients between $\tau_n$ and $\sigma_n^{i_0}\minus$, then 
\begin{equation*}
\sup_{t\in[\tau_n,\sigma_n^{i_0}[}|f(\phi_D(X^{i_0}_{t}))|=\sup_{t\in[\tau_n,\sigma_n^{i_0}[}|f(\phi_D(X^{i_0}_{t}))-f(\phi_D(X^{i_0}_{\sigma_n^{i_0}\minus}))|\xrightarrow[n\rightarrow\infty]{} 0,\ a.s. 
\end{equation*}
Since the process $\phi_D(X^{i_0})$ is continuous between $\tau_n$ and $\sigma_n^{i_0}-$, we conclude that $\phi_D(X^{i_0}_{\tau_n})$ doesn't lie above the support of $f$, for $n$ big enough almost surely. But the support of $f$ can be chosen arbitrarily close to $0$, it yields that $\phi_D(X^{i_0}_{\tau_n})$ goes to $0$ almost surely conditionally to $C_{\beta}$.

Let us denote by $(\tau^{i_0}_n)_n$ the sequence of jumping times of the particle $i_0$. We denote by $A_n$ the event
\begin{equation*}
 \label{s12_e7}
 A_n=\left\lbrace \exists i\neq i_0\,|\, \phi_D(X^{i}_{\tau_n^{i_0}})\leq f^{(N)}(\phi_D(X^{i_0}_{\tau_n^{i_0}})) \right\rbrace,
\end{equation*}
where $f^{(N)}$ is the function of Hypothesis \ref{hyp1} .
We have, for all $1\leq k\leq l$,
\begin{align*}
 P\left( \bigcap_{n=k}^{l+1}{A_n^c} \right)&=E\left(E\left( \prod_{n=k}^{l+1}\mathds{1}_{A_n^c}\,|\,(X^1_t,...X^N_t)_{0\leq t<\tau_{l+1}^{i_0}} \right) \right)\\
                                           &=E\left(\prod_{n=k}^{l}\mathds{1}_{A_n^c}E\left(\mathds{1}_{A_{l+1}^c}\,|\,(X^1_t,...X^N_t)_{0\leq t<\tau_{l+1}^{i_0}} \right) \right),
\end{align*}
where, by definition of the jump mechanism of the interacting particle system,
\begin{align*}
 E\left(\mathds{1}_{A_{l+1}^c}\,|\,(X^1_t,...X^N_t)_{0\leq t<\tau_{l+1}^{i_0}} \right)&={\cal J}^{(N)}(X^1_{\tau_{l+1}^{i_0}},...,X^N_{\tau_{l+1}^{i_0}})\left( {A_{l+1}^c} \right)\\
                                                                                      &\leq 1-p_0^{(N)},
\end{align*}
by Hypothesis \ref{hyp1}. By induction on $l$, we get
\begin{equation*}
 P\left( \bigcap_{n=k}^{l}{A_n^c} \right)\leq (1-p_0^{(N)})^{l-k},\ \forall 1\leq k \leq l.
\end{equation*}
Since $p_0^{(N)}>0$, it yields that
\begin{equation*}
 P\left( \bigcup_{k\geq 1}\bigcap_{n=k}^{\infty}{A_n^c} \right)=0.
\end{equation*}
It means that, for infinitely many jumps $\tau_n$ almost surely, one can find a particle $j$ such that $f^{(N)}(\phi_D(X^j_{\tau_n}))\leq \phi_D(X^{i_0}_{\tau_n})$. Because there is only a finite number of other particles, one can find a particle, say $j_0$ (which is a random variable), such that
\begin{equation*}
 f^{(N)}(\phi_D(X^{j_0}_{\tau_n}))\leq \phi_D(X^{i_0}_{\tau_n}),\ \text{for infinitely many}\ n\geq 1.
\end{equation*}
In particular, $\lim_{n\rightarrow\infty}{\left(\phi_D(X^{i_0}_{\tau_n}),f^{(N)}(\phi_D(X^{j_0}_{\tau_n}))\right)}=(0,0)$ almost surely. But $(f^{(N)})^{-1}$ is well defined and continuous near $0$, then
\begin{equation*}
 \lim_{n\rightarrow\infty}{\left(\phi_D(X^{i_0}_{\tau_n}),\phi_D(X^{j_0}_{\tau_n})\right)}=(0,0)\ a.s.
\end{equation*}

Using the coupling inequality of Proposition \ref{le1}, we deduce that
\begin{equation*}
 \label{eq60}
 C_{\beta}\subset\left\{\lim_{t\rightarrow\tau_{\infty}}{(Y^{\beta,i_0}_t,Y^{\beta,j_0}_t)}=(0,0)\right\}.
\end{equation*}
Then, conditionally to $C_{\beta}$, $Y^{\beta,i_0}$ and $Y^{\beta,j_0}$ are independent reflected diffusion processes with bounded drift, which hit $0$ at the same time. This occurs for two independent reflected Brownian motions with probability $0$, and then for $Y^{\beta,i_0}$ and $Y^{\beta,j_0}$ too, by the Girsanov's Theorem. That implies $P_x(C_{\beta})=0$.

We have proved that $\tau_{\infty}\geq S_{\beta}$ almost surely for all $\beta>0$, which leads to $\tau_{\infty}\geq S_{\infty}$ almost surely. Finally, we get $\tau_{\infty}=S_{\infty}$ almost surely.

If the first part of Hypothesis \ref{hypE} is fulfilled, one can defined the coupled reflected diffusion $Y^{\infty,i}$, which fulfills inequality \eqref{s12_e2} with $a=\alpha$ and for all $t\in[0,\tau_{\infty}\wedge S_{\infty}[=[0,\tau_{\infty}[$. Then the same proof leads to
\begin{equation*}
 \{\tau_{\infty}<+\infty\}\subset\left\{\lim_{t\rightarrow\tau_{\infty}}{(Y^{\infty,i_0}_t,Y^{\infty,j_0}_t)}=(0,0)\right\}.
\end{equation*}
Finally, we deduce that $\tau_{\infty}=\infty$ almost surely.
\end{proof}

\begin{remarque}\upshape
One could wonder if the previous coupling argument can be generalized, replacing \eqref{s11_e1b} by uniformly elliptic diffusion processes. In fact, such arguments lead to the definition of $Y^i$ as the reflected diffusion $Y^i_t=\int_0^t\phi(X^i_s)dW^i_s-Q_i t+L^{0}_t-L^{\alpha}_t$, where $\phi$ is a regular function. In our case of a drifted Brownian motion, $\phi$ is equal to $1$ and $Y^i$ is a reflected drifted Brownian motion independent of the others particles. But in the general case, the $Y^i$ are general orthogonal semi-martingales. It yields that the generalization of the previous proof reduces to the following hard problem (see \cite[Question 2, page 217]{Revuz1999} and references therein): "Which are the two-dimensional continuous semi-martingales for which the one point sets are polar ?". Since this question has no general answer, it seems that the previous proof doesn't generalize immediately to general uniformly elliptic diffusion processes.

We emphasize the fact that the proof of the exponential ergodicity can be generalized (as soon as $\tau_{\infty}=S_{\infty}=+\infty$ is proved), using the fact that $(Y^1_t,...,Y^N_t)_{t\geq 0}$ is a time changed Brownian motion with drift and reflection (see \cite[Theorem 1.9 (Knight)]{Revuz1999}). This time change argument has been developed in \cite{Grigorescu2009a}, with a different coupling construction. This change of time can also be used in order to generalize Theorem 2.3 below, as soon as the exponential ergodicity is proved.
\end{remarque}

\begin{proof}[Proof of the exponential ergodicity]
It is sufficient to prove that there exists $n\geq 1$, $\epsilon >0$ and a non-trivial probability $\vartheta$ on $D^N$ such that
\begin{equation}
\label{s11_e12}
P_x((X^1_{n t^{(N)}_0},...,X^N_{n t^{(N)}_0})\in A)\geq\epsilon\vartheta(A),\ \forall x\in K_0,\ A\in{\cal B}(D^N),
\end{equation}
with $K_0=\left(K^{(N)}_0\right)^{N}$, where $t^{(N)}_0$ and $K^{(N)}_0$ are defined in Hypothesis \ref{hypE},
and such that
\begin{eqnarray}
\label{s11_e13}
\sup_{x\in K_0}{E_x(\kappa^{\tau'})}<\infty,
\end{eqnarray}
where $\kappa$ is a positive constant and $\tau'=\min\{n\geq 1, (X^1_{n t^{(N)}_0},...,X^N_{n t^{(N)}_0})_{n\in\mathds{N}}\in K_0\}$ is the return time to $K_0$ of the Markov chain $(X^1_{n t^{(N)}_0},...,X^N_{n t^{(N)}_0})_{n\in\mathds{N}}$. Indeed, Down, Meyn and Tweedie proved in \cite[Theorem 2.1 p.1673]{Down1995} that if the Markov chain $(X^1_{n t^{(N)}_0},...,X^N_{n t^{(N)}_0})_{n\in\mathds{N}}$ is aperiodic (which is obvious in our case) and fulfills \eqref{s11_e12} and \eqref{s11_e13}, then it is geometrically ergodic. But, thanks to \cite[Theorem 5.3 p.1681]{Down1995}, the geometric ergodicity of this Markov chain is a sufficient condition for $(X^1,...,X^N)$ to be exponentially ergodic.

We assume without loss of generality that $K_0^{(N)}\subset D_{\alpha/2}$ (where $\alpha$ is defined in Hypothesis \ref{hypE}). Let us set
\begin{equation*}
\vartheta(A)=\frac{\prod_{i=1}^N \inf_{x\in D_{\alpha/2}}\mathds{P}^i(X^{(i)}_{t^{(N)}_0}\in A\cap K^{(N)}_0)}{\prod_{i=1}^N \inf_{x\in D_{\alpha/2}}\mathds{P}^i(X^{(i)}_{t^{(N)}_0}\in K^{(N)}_0)}.
\end{equation*}
Thanks to Hypothesis \ref{hypE}, $\vartheta$ is a non-trivial probability measure. Moreover, \eqref{s11_e12} is clearly fulfilled with $n=1$ and $\epsilon=\prod_{i=1}^N \inf_{x\in D_{\alpha}}\mathds{P}^i(X^{(i)}_{t^{(N)}_0}\in K^{(N)}_0)$.

Let us prove that $\exists \kappa>0$ such that \eqref{s11_e13} holds. One can define the $N$-dimensional diffusion $(Y^{\infty,1},...,Y^{\infty,N})$ reflected on $\{0,\alpha\}$ and coupled with $(X^1,...,X^N)$, so that inequality \eqref{s12_e2} is fulfilled for all $t\in[0,+\infty[$ and $a=\alpha$.
For all $x_0\in D^N$, we have by the Markov property
\begin{align*}
\mathds{P}_{x_0}((X^1_{2t^{(N)}_0},...,X^N_{2t^{(N)}_0})\in K_0^N)&\geq \mathds{P}_{x_0}(X^i_{t^{(N)}_0}\in D_{\alpha/2},\forall i)\inf_{x\in D_{\alpha/2}^N}\mathds{P}_x(X^i_{t^{(N)}_0}\in K^{(N)}_0,\forall i)\\
                                   &\geq \mathds{P}_{x_0}(X^i_{t^{(N)}_0}\in D_{\alpha/2},\forall i)\prod_{i=1}^N\inf_{x\in D_{\alpha/2}}\mathds{P}_x(X^i_{t^{(N)}_0}\in K^{(N)}_0)\\
                                   &\geq \mathds{P}_{x_0}(X^i_{t^{(N)}_0}\in D_{\alpha/2},\forall i) p_1^{(N)},
\end{align*}
where $p_1^{(N)}>0$ is defined in Hypothesis \ref{hypE}.
It yields that
\begin{align*}
\mathds{P}_{x_0}((X^1_{2t^{(N)}_0},...,X^N_{2t^{(N)}_0})\in K_0^N)&\geq p_1^{(N)}\mathds{P}_{x_0}(\phi_D(X^i_{t^{(N)}_0})>\alpha/2,\forall i)\\
                                   &\geq p_1^{(N)}\prod_{i=1}^N\mathds{P}_{Y^{\infty,i}_0}(Y^{\infty,i}_{t^{(N)}_0}>\alpha/2),
\end{align*}
thanks to Proposition \ref{le1}. A comparison argument shows that $\mathds{P}_{Y^{\infty,i}_0}(Y^{\infty,i}_{t^{(N)}_0}>\alpha/2)\geq \mathds{P}_{0}(Y^{\infty,i}>\alpha/2)$. Then
\begin{equation*}
\inf_{x_0\in D}\mathds{P}_{x_0}((X^1_{2t^{(N)}_0},...,X^N_{2t^{(N)}_0})\in K_0^N)\geq p_1^{(N)}\prod_{i=1}^N\mathds{P}_{0}(Y^{\infty,i}_{t^{(N)}_0}>\alpha/2)>0,
\end{equation*}
thanks to the strict positivity of the density  of the law of $Y^{\infty,i}_{t^{(N)}_0}$, for all $i\in\{1,...,N\}$.
Using the Markov property, we get, $\forall n\geq 1$,
\begin{align*}
P(\tau'\geq 2nt^{(N)}_0)&\geq (1-\inf_{x_0\in D}\mathds{P}_{x_0}((X^1_{2t^{(N)}_0},...,X^N_{2t^{(N)}_0})\in K_0^N))P(\tau'\geq 2(n-1)t^{(N)}_0)\\
                    &\geq (1-\inf_{x_0\in D}\mathds{P}_{x_0}((X^1_{2t^{(N)}_0},...,X^N_{2t^{(N)}_0})\in K_0^N))^{n},
\end{align*}
where $0<\inf_{x_0\in D}\mathds{P}_{x_0}((X^1_{2t^{(N)}_0},...,X^N_{2t^{(N)}_0})\in K_0^N)\leq 1$. It yields that there exists $\kappa>0$ such that \eqref{s11_e13} is fulfilled.
\end{proof}

\subsection{Uniform tightness of the empirical stationary distributions}
\label{partA_3}
In this part, the open set $D$ is supposed to be bounded. Assume that a jump measure ${\cal J}^{(N)}$ and a family of drifts $(q^{(N)}_i)_{i=1,...,N}$ are given for each $N\geq 2$.
\begin{hypothese}
\label{hypQ}
Hypotheses \ref{hypD} and \ref{hyp1} are fulfilled for each $N\geq 2$ and Hypothesis \ref{hypE} is fulfilled with the same $\alpha$ for each $N\geq 2$. Moreover, there exists $r>1$ such that
\begin{equation*}
\sup_{N\geq 2}\frac{1}{N}\sum_{i=1}^{N}r^{(Q^{(N)}_i)^2}<+\infty,
\end{equation*}
where $Q^{(N)}_i=-\inf_{x\in D\setminus D_{\alpha}}{\cal L}_i^{(N)} \phi_D(x)$.
\end{hypothese}

For all $N\geq 2$, we denote by $m^N\in{\cal M}_1(D^N)$ the initial distribution and by $\mu^N(t,dx)$ the empirical distribution of the $N$-particles process defined by the jump measure ${\cal J}^{(N)}$ and the family $(q_i^{(N)})_{i\in\{1,...,N\}}$. Its stationary distribution is denoted by $M^N$ and its empirical stationary distribution is denoted by ${\cal X}^N$:
\begin{equation*}
{\cal X}^N=\frac{1}{N}\sum_{i=1}^N{\delta_{x_i}} 
\end{equation*}
where $(x^1,...,x^N)$ is a random vector in $D^N$ distributed following $M^N$.

\begin{theoreme}
\label{thB}
Assume that Hypothesis \ref{hypQ} is fulfilled. For all sequence of measures $m^N\in{\cal M}_1(D^N)$ and all $t>0$, the family of random measures $\left(\mu^{N}(t,dx)\right)_{N\geq 2}$ is uniformly tight. In particular, the family of empirical stationary distributions $\left({\cal X}^N\right)_{N\geq 2}$ is uniformly tight.
\end{theoreme}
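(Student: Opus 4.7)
My plan is to reduce the tightness problem to a uniform control of the mass that each particle places in an $\eta$-tube around $\partial D$, and then to import this control from the coupled reflected diffusions of Proposition~\ref{le1} through the integrability hypothesis on the $Q_i^{(N)}$'s. Since $D$ is bounded, uniform tightness of $(\mu^N(t,dx))_{N\geq 2}$ is equivalent to
\begin{equation*}
\lim_{\eta\to 0}\sup_{N\geq 2}E\bigl(\mu^N(t,D\setminus D_\eta)\bigr)=0,\quad\text{where}\quad E\bigl(\mu^N(t,D\setminus D_\eta)\bigr)=\frac{1}{N}\sum_{i=1}^{N}P_{m^N}(\phi_D(X^i_t)\leq \eta).
\end{equation*}

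Using the remark following Proposition~\ref{le1} (the first part of Hypothesis~\ref{hypE} holds under Hypothesis~\ref{hypQ}), I would couple, for each $i$, the particle $X^i$ with a one-dimensional diffusion $Y^{\infty,i}$ reflected on $[0,\alpha]$, driven by a Brownian motion with drift $-Q_i^{(N)}$ and starting at $\phi_D(X^i_0)\wedge \alpha$, so that $Y^{\infty,i}_t\leq \phi_D(X^i_t)\wedge\alpha$ for all $t\geq 0$. For any $\eta<\alpha$ this yields
\begin{equation*}
E\bigl(\mu^N(t,D\setminus D_\eta)\bigr)\leq \frac{1}{N}\sum_{i=1}^{N}P(Y^{\infty,i}_t\leq \eta).
\end{equation*}
I then split the sum at a threshold $M>0$. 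For the indices with $Q_i^{(N)}>M$, Markov's inequality applied to $r^{(Q_i^{(N)})^2}$ and Hypothesis~\ref{hypQ} give
\begin{equation*}
\frac{1}{N}\#\{i\leq N:Q_i^{(N)}>M\}\leq r^{-M^2}\sup_{N\geq 2}\frac{1}{N}\sum_{i=1}^{N}r^{(Q_i^{(N)})^2}\leq C\,r^{-M^2},
\end{equation*}
which is arbitrarily small uniformly in $N$ for $M$ large. For the remaining indices, $Y^{\infty,i}$ is a reflected drifted Brownian motion on the compact interval $[0,\alpha]$ with drift in $[-M,0]$; a standard density estimate (via Girsanov reducing to the reflected Brownian motion on $[0,\alpha]$, whose transition density is explicitly uniformly bounded for $t>0$) produces a bound $P(Y^{\infty,i}_t\leq \eta)\leq C(t,M,\alpha)\,\eta$ uniform in the starting point and in $Q_i^{(N)}\in[0,M]$. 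Combining,
\begin{equation*}
E\bigl(\mu^N(t,D\setminus D_\eta)\bigr)\leq C\,r^{-M^2}+C(t,M,\alpha)\,\eta,
\end{equation*}
and we conclude by choosing $M$ large first and $\eta$ small afterwards.

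The statement on the empirical stationary distributions follows at once by specializing the previous argument to $m^N=M^N$: stationarity means $\mu^N(t,dx)$ has the same law as $\mathcal{X}^N$ for every $t\geq 0$, so the bound above applied to any fixed $t>0$ gives uniform tightness of $(\mathcal{X}^N)_{N\geq 2}$. Alternatively, one may let $t\to\infty$ in the coupling bound and use that $Y^{\infty,i}_t$ converges to its explicit Boltzmann-type stationary law proportional to $e^{-2Q_i^{(N)}y}\,dy$ on $[0,\alpha]$, which yields the cleaner stationary estimate $E(\mathcal{X}^N(D\setminus D_\eta))\leq C\,r^{-M^2}+\tfrac{2M\eta}{1-e^{-2M\alpha}}$. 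The only delicate point is the uniform-in-starting-point density estimate for the drifted reflected Brownian motion on $[0,\alpha]$ when the drift is bounded; this is where I expect to spend the most technical effort, although it is classical.
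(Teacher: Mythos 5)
Your proof is correct, and its overall architecture coincides with the paper's: both reduce tightness to bounding $\frac{1}{N}\sum_{i}P(Y^{\infty,i}_t\leq\eta)$ via the coupling of Proposition \ref{le1} (with $a=\alpha$ uniform in $N$, as guaranteed by Hypothesis \ref{hypQ}), and both specialize to $m^N=M^N$ for the stationary statement. Where you diverge is in how Hypothesis \ref{hypQ} is brought to bear on that sum. The paper keeps all indices together: it applies Girsanov and Cauchy--Schwarz to each term, producing a factor $e^{\frac{3}{2}(Q^{(N)}_i)^2(t-s)}$ times a quantity independent of $i$ (a reflected Brownian motion started at $0$), and then must use the Markov property to restart the system at a time $s_0$ close enough to $t$ that $e^{\frac{3}{2}(t-s_0)}\leq r$, so that $\frac{1}{N}\sum_i e^{\frac{3}{2}(Q^{(N)}_i)^2(t-s_0)}$ is dominated by the quantity assumed finite in Hypothesis \ref{hypQ}. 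You instead truncate: a Markov inequality on $r^{(Q^{(N)}_i)^2}$ shows the fraction of indices with $Q^{(N)}_i>M$ is $O(r^{-M^2})$ uniformly in $N$, and for the remaining indices you invoke a density bound for reflected Brownian motion on $[0,\alpha]$ with drift bounded by $M$. Your route avoids the time-shift trick entirely and uses only the tail of the empirical distribution of the $Q^{(N)}_i$'s, at the cost of the uniform-in-starting-point density estimate you flag; note that this estimate can be obtained by exactly the Girsanov--Cauchy--Schwarz computation the paper performs (yielding $C(t,M,\alpha)\sqrt{\eta}$ rather than $C\eta$, which is still sufficient), so nothing essentially new is needed there. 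Both arguments are sound; the paper's is slightly more economical in that a single inequality covers all $i$ at once, while yours makes more transparent that only a vanishing proportion of particles can have large drift constants.
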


\begin{proof}
 Let us consider the process $(X^1,...,X^N)$ starting with a distribution $m^N$ and its coupled process $(Y^{\infty,1},...,Y^{\infty,N})$.
  For all $t\in[0,\tau_{\infty}[$, we denote by  $\mu'^N(t,dx)$ the empirical measure of $(Y^{\infty,1}_t,...,Y^{\infty,N}_t)$.
 By the coupling inequality \eqref{s12_e2}, we get
\begin{equation*}
 \mu^N(t,D_r^c)\leq\mu'^N(t,[0,r]),\ \forall r\in[0,\alpha].
\end{equation*}
 Using the Markov property, we deduce that, for all $s<t$,
\begin{align*}
E_{X^1_{s},...,X^N_{s}}\left(\mu^N(t-s,D_r^c)\right)&\leq E_{Y^{\infty,1}_{s},...,Y^{\infty,N}_{s}}\left(\mu'^N(t-s,D_r^c)\right) \text{a.s.}
\end{align*}
Then, by a comparison argument,
\begin{align}
E_{X^1_{s},...,X^N_{s}}\left(\mu^N(t-s,D_r^c)\right)&\leq E_{0,...,0}\left(\mu'^N(t-s,D_r^c)\right) \ \text{a.s.}\nonumber\\
                      &\leq \frac{1}{N}\sum_{i=1}^N P_0(Y^{\infty,i}_{t-s}\leq r)\ \text{a.s.} \label{ceq6}
\end{align}
Thanks to the Girsanov's Theorem, we have
\begin{equation*}
P_0(Y^{\infty,i}_{t-s}\leq r)= E_{0}\left(\delta_{w^i_{t-s}+L_{t-s}^{i,\alpha}-L_{t-s}^{i,0}}([0,r])e^{Q^{(N)}_i w^i_{t-s} -(Q^{(N)}_i)^2 ({t-s})}\right)e^{\frac{3}{2}(Q^{(N)}_i)^2 ({t-s})},
\end{equation*}
where $(w^1,...,w^N)$ is a $N$-dimensional Brownian motion.
By the Cauchy Schwartz inequality, we get
\begin{align*}
P_0(Y^{\infty,i}_{t-s}\leq r)&\leq \sqrt{E_{0}\left(\left(\delta_{w^i_{t-s}+L_{t-s}^{i,\alpha}-L_{t-s}^{i,0}}([0,r])\right)^2\right)E_{0}\left(\left(e^{Q^{(N)}_i w^i_{t-s} - (Q^{(N)}_i)^2 ({t-s})}\right)^2\right)},\\
                   &\leq \sqrt{E_{0}\left(\delta_{w^i_{t-s}+L_{t-s}^{i,\alpha}-L_{t-s}^{i,0}}([0,r])\right)}
\end{align*}
where the second inequality occurs, since $0\leq \delta_{w^i_{t-s}+L_{t-s}^{i,\alpha}-L_{t-s}^{i,0}}([0,r]) \leq 1$ almost surely and the process $e^{2 Q^{N}_i w^i_t - 2 (Q^{(N)}_i)^2 t}$ is the Dol\'eans exponential of $2Q^{(N)}_i w^i_t$, whose expectation is $1$. Taking the expectation in \eqref{ceq6}, it yields that
\begin{align*}
E_{m^N}\left(\mu^N(t,D_r^c)\right)&\leq \sqrt{P_0\left(\delta_{w^i_{t-s}+L_{t-s}^{i,\alpha}-L_{t-s}^{i,0}}([0,r])\right)}\frac{1}{N}\sum_{i=1}^N e^{\frac{3}{2}(Q_i^{(N)})^2(t-s)}, \ \forall 0<s<t.
\end{align*}
Thanks to Hypothesis \ref{hypQ}, there exists $s_0\in]0,t[$ such that $\frac{1}{N}\sum_{i=1}^N e^{\frac{3}{2}(Q_i^{(N)})^2(t-s_0)}$ is uniformly bounded in $N\geq 2$. But $P_0\left(\delta_{w^i_{t-s_0}+L_{t-s_0}^{i,\alpha}-L_{t-s_0}^{i,0}}([0,r])\right)$ goes to $0$ when $r\rightarrow 0$, so that the family of random measures $(\mu^N(t,dx))_{N\geq 2}$ is uniformly tight. 

If we set $m^N$ equal to the stationary distribution $M^N$, then
we get by stationarity that ${\cal X}^N$ is distributed as $\mu^N(t,.)$, for all $N\geq 2$ and $t>0$. Finally, the family of empirical stationary distributions $({\cal X}^N)_{N\geq 2}$ is uniformly tight.
\end{proof}

\section{Yaglom limit's approximation}
\label{partApproximation}
We consider now the particular case ${\cal J}^{(N)}(x_1,...,x_N)=\frac{1}{N-1}\sum_{k=1,k\neq i}^{N}{\delta_{x_k}}$: at each jump time, the particle which hits the boundary jumps to the position of a particle chosen uniformly between the $N-1$ remaining ones. We assume moreover that $q^{(N)}_i=q$ doesn't depend on $i,N$. In this framework, we are able to identify the limiting distribution of the empirical stationary distribution sequence, when the number of particles tends to infinity. This leads us to an approximation method of the Yaglom limits \eqref{intyag1}, including cases where the drift of the diffusion process isn't bounded and where the boundary is neither regular nor bounded.

Let $U_{\infty}$ be an open domain of $\mathds{R}^d$, with $d\geq 1$.
We denote by $\mathds{P}^{\infty}$ the law of the diffusion process defined on $U_{\infty}$ by
\begin{equation}
\label{s21_e1}
 dX^{\infty}_t=dB_t-\nabla V(X^{\infty}_t)dt,\ X^{\infty}_0=x\in U_{\infty}
\end{equation}
and absorbed at the boundary $\partial U_{\infty}$. Here $B$ is a $d$-dimensional Brownian motion and $V\in C^2(U_{\infty},\mathds{R})$. 
We assume that Hypothesis \ref{H3} below is fulfilled, so that the Yaglom limit 
\begin{equation}
\label{s21_e2}
 \nu_{\infty}=\lim_{t\rightarrow+\infty} \mathds{P}^{\infty}_x\left(X^{\infty}_t\in .| t\leq \tau_{\partial} \right), \forall x\in U_{\infty}
\end{equation}
exists and doesn't depend on $x$, as proved by Cattiaux and M\'el\'eard in \cite[Theorem B.2]{Cattiaux2008}.
We emphasize the fact that this hypothesis allows the drift $\nabla V$ of the diffusion process \eqref{s21_e1} to be unbounded and the boundary $\partial U_{\infty}$ to  be neither of class $C^2$ nor bounded. In particular, the results of the previous section aren't available in all generality for diffusion processes with law $\mathds{P}^{\infty}$.
\begin{hypothese}
\label{H3}
We assume that
 \begin{enumerate}
 \item $\mathds{P}^{\infty}_x(\tau_{\partial}<+\infty)=1$,
 \item $\exists C>0$ such that $G(x)=|\nabla V|^2(x)-\Delta V(x)\geq -C > -\infty$, $\forall x\in U_{\infty}$,
 \item $\overline{G}(R)\rightarrow+\infty$ as $R\rightarrow\infty$, where
\begin{equation*}
 \overline{G}(R)=\inf\left\{ G(x); |x|\geq R\ \text{and}\  x\in U_{\infty} \right\},
\end{equation*}
 \item There exists an increasing sequence $(U_{m})_{m\geq0}$ of bounded open subsets of $U_{\infty}$, such that the boundary of $U_m$ is of class $C^2$ for all $m\geq 0$, and such that $\bigcup_{m\geq 0}\overline{U_m}=U_{\infty}$.
 \item There exists $R_0>0$ such that
\begin{multline*}
 \int_{U_{\infty}\cap \{d(x,\partial U_{\infty})>R_0\}}{e^{-2V(x)}dx}<\infty\ \text{and}\\
        \ \int_{U_{\infty}\cap \{d(x,\partial U_{\infty})\leq R_0\}}{\left( \int_{U_{\infty}}{p_1^{U_{\infty}}(x,y) dy} \right)e^{-V(x)}dx}<\infty.
\end{multline*}
Here $p_1^{U_{\infty}}$ is the transition density of the diffusion process \eqref{s21_e1} with respect to the Lebesgue measure.
\end{enumerate}
\end{hypothese}
 According to \cite{Cattiaux2008}, the second point implies that the semi-group induced by $\mathds{P}^{\infty}$ is ultra-contractive. The assumptions 1-4 imply that the generator associated with $\mathds{P}^{\infty}$ has a purely discrete spectrum and that its minimal eigenvalue $-\lambda_{\infty}$ is simple and negative. The last assumption ensures that the eigenfunction associated with $-\lambda_{\infty}$ is integrable with respect to $e^{-2V(x)}dx$. Finally, Hypothesis \ref{H3} is sufficient for the existence of the Yaglom limit \eqref{s21_e2}.

\begin{remarque}\upshape
 For example, it is proved in \cite{Cattiaux2008} that Hypothesis \ref{H3} is fulfilled by the Lotka-Volterra system studied numerically in Subsection \ref{partApproximation_3}. Up to a change of variable, this system is defined by the diffusion process with values in $U_{\infty}=\mathds{R}_+^2$, which satisfies
\begin{equation}
\label{eqr1}
 \begin{split}
  dY^1_t=dB^1_t+\left(\frac{r_1 Y^1_t}{2}-\frac{c_{11}\gamma_1\left( Y^1_t \right)^3}{8} -\frac{c_{12}\gamma_2 Y_t^1\left( Y^2_t \right)^2}{8} -\frac{1}{2 Y_t^1}  \right)dt\\
  dY^2_t=dB^2_t+\left(\frac{r_2 Y^2_t}{2}-\frac{c_{22}\gamma_2\left( Y^2_t \right)^3}{8} -\frac{c_{21}\gamma_1 Y_t^2\left( Y^1_t \right)^2}{8} -\frac{1}{2 Y_t^2}  \right)dt
 \end{split}
\end{equation}
and is absorbed at $\partial U_{\infty}$. Here $B^1,B^2$ are two independent one-dimensional Brownian motions and the parameters of the diffusion process fulfill condition \eqref{eql3}.
\end{remarque}

In order to define the interacting particle process of the previous section, we work with diffusion processes defined on $U_m$, $m\geq 0$.
More precisely, for all $m\geq 0$, we denote by $\mathds{P}^{m}$ the law of the diffusion process defined on $U_m$ by
\begin{equation}
\label{ceq3}
 dX^{U_m}_t=dB_t-q_m(X^{U_m}_t)dt,\ X_0^{U_m}=x\in U_m
\end{equation}
and absorbed at the boundary $\partial U_m$. Here $B$ is a $d$-dimensional Brownian motion and $q_m:\overline{U_m}\mapsto \mathds{R}$ is a continuous function.  We denote by ${\cal L}^{m}$ the infinitesimal generator of the diffusion process with law $\mathds{P}^{m}$.
For all $m\geq 0$, the diffusion process with law $\mathds{P}^{m}$ clearly fulfills the conditions of Section \ref{partA}. For all $N\geq 2$, let $(X^{m,1},...,X^{m,N})$ be the interacting particle process defined by the law $\mathds{P}^{m}$ between the jumps and by the jump measure ${\cal J}^{(m,N)}(x_1,...,x_N)=\frac{1}{N-1}\sum_{k=1,k\neq i}^{N}{\delta_{x_k}}$. By Theorem \ref{thA_1}, this process is well defined and exponentially ergodic.

For all $m\geq 0$ and all $N\geq 2$, we denote by $\mu^{m,N}(t,dx)$ the empirical distribution of $(X^{m,1}_t,...,X^{m,N}_t)$, by $M^{m,N}$ the stationary distribution of $(X^{m,1},...,X^{m,N})$ and by ${\cal X}^{m,N}$ the associated empirical stationary distribution.

We are now able to state the main result of this section.
\begin{theoreme}
\label{th3_1}
 Assume that Hypothesis \ref{H3} is satisfied and that $q_m=\nabla V\mathds{1}_{\overline{U_m}}$ for all $m\geq 0$. Then
 \begin{equation*}
  \lim_{m\rightarrow \infty} \lim_{N\rightarrow\infty} {\cal X}^{m,N}=\nu_{{\infty}},
 \end{equation*}
in the weak topology of random measures, which means that, for all bounded continuous function $f:U_{\infty}\mapsto\mathds{R_+}$,
\begin{equation*}
\lim_{m\rightarrow \infty} \lim_{N\rightarrow\infty} E({\cal X}^{m,N}(f))=\nu_{{\infty}}(f).
\end{equation*}
\end{theoreme}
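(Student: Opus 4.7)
The plan is to split the double limit into two independent steps: first the inner limit $N\to\infty$ at fixed cut-off level $m$, then the outer limit $m\to\infty$. For each fixed $m\geq 0$, the domain $U_m$ is bounded with $C^2$ boundary and $\nabla V$ is bounded on $\overline{U_m}$, so the diffusion with law $\mathds{P}^m$ killed on $\partial U_m$ admits a unique quasi-stationary distribution, which I denote by $\nu_m$. I will show separately that $\lim_{N\to\infty}\mathcal{X}^{m,N}=\nu_m$ weakly, and then that $\nu_m\to\nu_\infty$ weakly as $m\to\infty$; combining these two limits yields the theorem.

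For the inner limit at fixed $m$, I start from the uniform tightness of Theorem \ref{thB} to extract a weak subsequential limit $\pi^m$ of $(\mathcal{X}^{m,N})_{N\geq 2}$. By exchangeability of the particles under the stationary measure $M^{m,N}$, a propagation-of-chaos argument should show that the marginal law of a tagged particle under $M^{m,N}$ converges to $\pi^m$, and that its full dynamics converges to that of the following auxiliary Markov process: diffuse on $U_m$ according to $\mathds{P}^m$ until hitting $\partial U_m$, then re-inject at a point sampled independently from $\pi^m$ and restart. Because $M^{m,N}$ is invariant, $\pi^m$ must itself be the stationary distribution of this $\pi^m$-return process. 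Testing this self-consistency against $f\in C_c^2(U_m)$ and computing the stationary mean of the semimartingale decomposition (noting that $f$ vanishes at every pre-jump boundary position and that the post-jump average of $f$ equals $\pi^m(f)$) translates the invariance into the identity $\pi^m(\mathcal{L}^m f)=-\lambda\,\pi^m(f)$, where $\lambda>0$ is the stationary jump rate of the tagged particle. This is the quasi-stationary equation on the bounded $C^2$ domain $U_m$, whose unique probability solution is $\nu_m$; hence $\pi^m=\nu_m$, and the full sequence converges.

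For the outer limit I would rely on the spectral framework of \cite{Cattiaux2008}. Under Hypothesis \ref{H3} their Theorem B.2 applies equally to each bounded regular $U_m$ with drift $-\nabla V$: the associated generator on $L^2(e^{-2V}dx)$ has purely discrete spectrum with simple smallest eigenvalue $-\lambda_m$, and $\nu_m=C_m\eta_m(x)e^{-2V(x)}dx$ for a normalized positive ground state $\eta_m$. Monotonicity of Dirichlet forms along the exhaustion $U_m\uparrow U_\infty$ provided by point 4 of Hypothesis \ref{H3} yields $\lambda_m\searrow\lambda_\infty$ together with $L^2(e^{-2V}dx)$-convergence of $\eta_m$ to the corresponding ground state $\eta_\infty$ of the limiting generator; the integrability conditions in point 5 of Hypothesis \ref{H3} then upgrade this into convergence of the normalizing constants $C_m\to C_\infty$, and hence into weak convergence of the probability measures $\nu_m\to\nu_\infty$.

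The main obstacle is the inner limit, because jumps occur at hitting times of $\partial U_m$ rather than at a bounded intensity, and the re-injection law depends on the configuration of the whole system. To make the tagged-particle argument rigorous, I expect to need (i) a uniform control on the expected number of jumps per unit time of a single particle in the stationary regime, which can be extracted from the coupling of Proposition \ref{le1} combined with the uniform bound on $E[\mathcal{X}^{m,N}(D_r^c)]$ obtained in the proof of Theorem \ref{thB}, and (ii) a continuity argument guaranteeing that at a jump time $\tau$ the empirical re-injection measure $\frac{1}{N-1}\sum_{k\neq i}\delta_{X^{m,k}_{\tau-}}$ still converges weakly to $\pi^m$, despite the conditioning on one particle touching the boundary. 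Once these ingredients are in place, the identification of $\pi^m$ with $\nu_m$ reduces to classical uniqueness of quasi-stationary distributions on bounded regular domains, and the outer limit is pure spectral theory.
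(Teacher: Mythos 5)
Your two-step decomposition (inner limit $N\to\infty$ at fixed $m$, then $m\to\infty$) matches the paper's structure, but the inner limit as you describe it is a program, not a proof, and the program you sketch is precisely the one the paper is designed to avoid. A tagged-particle / propagation-of-chaos argument here faces the difficulty you yourself flag: re-injections occur at boundary hitting times, so the ``jump rate'' of a tagged particle is not a bounded intensity but behaves like a boundary local time, and your stationary identity $\pi^m(\mathcal{L}^m f)=-\lambda\,\pi^m(f)$ requires both the existence of a finite stationary jump rate and an exchange of limits between the empirical re-injection measure and $\pi^m$ at those singular times. Neither of the ingredients (i)--(ii) you list is supplied, and they are exactly where such arguments are hard (which is why the conjecture of Burdzy \emph{et al.} took work to settle even in the Brownian case). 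The paper's key idea, absent from your proposal, bypasses chaos propagation entirely: it introduces the mass-discounted empirical measure $\nu^{m,N}(t,dx)=\left(\frac{N-1}{N}\right)^{A^N_t}\mu^{m,N}(t,dx)$, where $A^N_t$ counts jumps, so that each jump's mass creation is exactly cancelled. Applying It\^o's formula to $\nu^{m,N}(t,\Psi)$ with the space-time harmonic test function $\Psi^{\delta}(t,x)=P^m_{T-t}P^m_{\delta}f(x)$ kills the drift term identically, leaving martingales whose $L^2$ norms are $O(N^{-1/4})$ (via a gradient estimate for the killed semigroup on the regular bounded domain $U_m$ and a bound on the jump martingale's quadratic variation). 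This gives $E(\nu^{m,N}(T,f))\to E(\mu_m(P^m_T f))$; renormalizing yields $\mu^{m,N}(T,\cdot)\to\mathds{P}^m_{\mu_m}(X_T\in\cdot\,|\,X_T\in U_m)$, and stationarity plus $T\to\infty$ identifies the limit of $\mathcal{X}^{m,N}$ with $\nu_m$. Without this device, or a fully worked-out substitute for your items (i)--(ii), the inner limit is not established.

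On the outer limit, your monotonicity argument for $\lambda_m\searrow\lambda_\infty$ is the paper's, but your assertion that monotonicity of Dirichlet forms yields $L^2(e^{-2V}dx)$-convergence of $\eta_m$ to $\eta_\infty$ skips the real work: since $U_\infty$ is unbounded, one must rule out that the ground states lose mass at infinity or degenerate to zero along the exhaustion. The paper proves uniform tightness of $\left(\eta_m\mathds{1}_{U_m}e^{-2V}dx\right)_{m\geq 0}$ using the integrability conditions of Hypothesis \ref{H3} together with $\sup_m\|\eta_m e^{-V}\|_\infty<\infty$, identifies any weak limit as $\beta\,\eta_\infty e^{-2V}dx$ via the eigenvalue equation and elliptic regularity, and then shows $\beta>0$ by combining the estimate $\left\langle\eta_m,G\eta_m\right\rangle_{\sigma,m}\le\lambda_1$ with $\overline{G}(R)\to\infty$. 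That positivity step has no counterpart in your sketch and cannot be dispensed with.
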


In Section \ref{partApproximation_1}, we fix $m\geq 0$ and we prove that the sequence $({\cal X}^{m,N})_{N\geq 2}$ converges to a deterministic probability $\nu_{m}$ when $N$ goes to infinity. In particular, we prove that $\nu_{m}$ is the Yaglom limit associated with $\mathds{P}^{m}$, which exists by \cite{Cattiaux2008}.
In Section \ref{partApproximation_2}, we conclude the proof, proceeding by a compactness/uniqueness argument: we prove that $(\nu_{m})_{m\geq 0}$ is a uniformly tight family and we show that each limiting probability of the family $(\nu_{m})_{m\geq 0}$ is equal to the Yaglom limit $\nu_{\infty}$. 
The last Section \ref{simulation} is devoted to numerical illustrations of Theorem \ref{th3_1}.

\subsection{Convergence of $({\cal X}^{m,N})_{N\geq 2}$, when $m\geq 0$ is fixed}
\label{partApproximation_1}

\begin{proposition}
\label{proApproximation_1}
 Let $m\geq 0$ be fixed and let $q_m:\overline{U_m}\mapsto\mathds{R}$ be a continuous function. Assume that $\mu^{m,N}(0,dx)$ converges in the weak topology of random measure to a random probability measure $\mu_m$ with values in ${\cal M}_1(U_m)$, when $N\rightarrow\infty$. Then, for all $T\geq 0$, $\mu^{m,N}(T,dx)$ converges in the weak topology of random measure to $\mathds{P}^m_{\mu_m}(X_T\in .|X_T\in U_m)$ when $N$ goes to infinity.

 Moreover, if there exists $\nu_m\in{\cal M}_1(U_m)$ such that
\begin{equation}
\label{s21_e6}
 \nu_{m}=\lim_{t\rightarrow\infty}\mathds{P}^{m}_{\mu}\left( X^{U_m}_t \in . | X^{U_m}_t\in U_{m}\right),\ \forall \mu\in{\cal M}_1(U_m),
\end{equation}
then the sequence of empirical stationary distributions $({\cal X}^{m,N})_{N\geq 2}$ converges to $\nu_{m}$ in the weak topology of random measures when $N$ goes to infinity.
\end{proposition}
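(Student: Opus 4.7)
The proposition splits naturally into a hydrodynamic-limit statement (first part) and its consequence for the stationary empirical distribution (second part). For the first part I would follow the standard three-step scheme for Fleming--Viot type systems: tightness of $(\mu^{m,N})_{N\geq 2}$, identification of any weak limit point via a martingale problem, and uniqueness of the limiting evolution. Since $U_m$ is bounded with $C^2$ boundary and $q_m$ is continuous, Theorem \ref{thB} (applied with a deterministic initial law) provides uniform tightness of $(\mu^{m,N}(T,\cdot))_{N\geq 2}$ for each fixed $T\geq 0$; an equicontinuity estimate for $t\mapsto\mu^{m,N}(t,f)$ with $f\in C^2$ compactly supported in $U_m$ lifts this to tightness on the Skorokhod path space of measure-valued processes.

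To identify any subsequential limit I would write the semi-martingale decomposition of $\mu^{m,N}(t,f)=\frac{1}{N}\sum_{i=1}^N f(X_t^{m,i})$ for such a test function $f$. Between jumps, It\^o's formula gives a drift $\int_0^t \mu^{m,N}(s,{\cal L}^m f)\,ds$ plus a martingale with quadratic variation of order $1/N$. At each jump of a particle $i$, the jumping position $X_t^{m,i}\in\partial U_m$ contributes $0$ since $f\equiv 0$ there, while the new position is drawn from $\frac{1}{N-1}\sum_{j\neq i}\delta_{X_t^{m,j}}$; hence the compensated jump contribution to $\mu^{m,N}(\cdot,f)$ is asymptotically $\int_0^t \lambda_s\,\nu_s(f)\,ds$, where $\lambda_s\geq 0$ is the limiting per-particle absorption rate. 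Any weak limit $(\nu_t)_{t\geq 0}$ therefore satisfies
\begin{equation*}
\nu_t(f)=\mu_m(f)+\int_0^t\nu_s({\cal L}^m f)\,ds+\int_0^t \lambda_s\,\nu_s(f)\,ds,\qquad \nu_0=\mu_m,
\end{equation*}
which is exactly the renormalized forward equation for the diffusion absorbed on $\partial U_m$. On the bounded $C^2$ domain $U_m$ with continuous drift, standard linear parabolic theory gives uniqueness for the unnormalized Kolmogorov equation, hence for this nonlinear normalization, so $\nu_t=\mathds{P}^m_{\mu_m}(X_t^{U_m}\in\cdot\mid X_t^{U_m}\in U_m)$, proving the first assertion.

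For the second assertion I would start the system from its stationary law $M^{m,N}$, so that by stationarity $\mu^{m,N}(T,\cdot)$ has the same distribution as ${\cal X}^{m,N}$ for every $T\geq 0$. Theorem \ref{thB} gives uniform tightness of $({\cal X}^{m,N})_{N\geq 2}$; extract a subsequence along which ${\cal X}^{m,N}$ converges weakly to some random probability $\mu_m^\star$ on $U_m$. Applying the first part along this subsequence forces $\mu_m^\star$ to have the same law as $\mathds{P}^m_{\mu_m^\star}(X_T^{U_m}\in\cdot\mid X_T^{U_m}\in U_m)$ for every $T\geq 0$. Letting $T\to\infty$ and invoking the Yaglom-limit hypothesis \eqref{s21_e6}, the right-hand side converges almost surely to the deterministic measure $\nu_m$, so $\mu_m^\star=\nu_m$ almost surely. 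Since every subsequential limit coincides with the same deterministic probability, the whole sequence $({\cal X}^{m,N})_{N\geq 2}$ converges to $\nu_m$ in the weak topology of random measures.

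The main obstacle is controlling the jump term uniformly in $N$, since a priori the total jump intensity of the system could blow up as $N\to\infty$ and the jump kernel depends nonlinearly on the empirical measure itself. This is precisely what the coupling of Proposition \ref{le1} provides: dominating each $\phi_D(X^{m,i})$ by an independent reflected diffusion on $[0,\alpha]$ yields a uniform (in $N$) control of the boundary local time per particle, hence of the mean number of jumps per unit time per particle, legitimating the passage to the limit in the compensated jump martingale and the identification of $\lambda_s$ with the limiting absorption rate.
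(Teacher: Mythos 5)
Your second half (start from stationarity, use the first part, let $T\to\infty$ and invoke \eqref{s21_e6}, conclude by uniqueness of the subsequential limit) is exactly the paper's argument and is fine. The first half, however, takes a genuinely different route --- a hydrodynamic limit via tightness on path space and a martingale problem --- and it contains a real gap precisely at the point you flag as ``the main obstacle''. Your limiting equation involves $\int_0^t\lambda_s\,\nu_s(f)\,ds$ with $\lambda_s$ ``the limiting per-particle absorption rate'', but nothing in your argument shows that $A^N_t/N$ is tight, converges, or even has uniformly bounded expectation, nor that its limit can be identified with the boundary flux of the absorbed semigroup. The appeal to Proposition \ref{le1} does not close this: the coupling gives the one-sided domination $Y^{\beta,i}_t\le\phi_D(X^i_t)$, which keeps particles away from the boundary \emph{in distribution} (and is what drives Theorem \ref{thB}), but a lower bound on $\phi_D(X^i)$ by a reflected diffusion yields no upper bound on how many times $\phi_D(X^i)$ actually hits $0$; the local time of $Y$ at $0$ accumulates on a time set of measure zero and is not comparable to a jump count. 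Controlling and identifying the jump intensity is the genuinely hard part of the hydrodynamic limit for Fleming--Viot-type systems, and it is not supplied here.

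The paper avoids this problem entirely by two devices you should note. First, it works with the mass-penalized measure $\nu^{m,N}(t,dx)=\bigl(\tfrac{N-1}{N}\bigr)^{A^N_t}\mu^{m,N}(t,dx)$: the factor $\bigl(\tfrac{N-1}{N}\bigr)^{A^N_t}$ is engineered so that the $O(1/(N-1))$ drift created by each redistribution jump cancels exactly, leaving only a pure-jump martingale whose predictable bracket is bounded by $\frac{4}{N^2}\|\Psi\|_\infty^2\sum_{n}\bigl(\tfrac{N-1}{N}\bigr)^{2n}\le\frac{4}{N}\|\Psi\|_\infty^2$ \emph{no matter how many jumps occur}; no a priori bound on $A^N_T$ is needed. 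Second, instead of identifying a forward equation, it tests against the space-time harmonic function $\Psi^{\delta}(t,x)=P^m_{T-t}P^m_{\delta}f(x)$, so the drift term vanishes by Kolmogorov's equation and one gets directly $\nu^{m,N}(T,f)\approx\nu^{m,N}(0,P^m_Tf)\to\mu_m(P^m_Tf)$ in $L^2$, with the gradient estimate of Priola--Wang controlling the continuous martingale; the conditioned law is then recovered by normalizing, $\mu^{m,N}(T,\cdot)=\nu^{m,N}(T,\cdot)/\nu^{m,N}(T,U_m)$. If you want to keep your martingale-problem route, you must either prove tightness and identification of $t\mapsto A^N_t/N$ (which is substantial extra work) or adopt a cancellation of this kind.
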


\begin{remarque}\upshape
In Proposition \ref{proApproximation_1}, $\nu_m$ is the Yaglom limit and the unique quasi-stationary distribution associated with $\mathds{P}^{m}$. For instance, each of the two following conditions is sufficient for the existence of such a measure:
\begin{enumerate}
\item If $q_m=\mathds{1}_{\overline{U_m}}\nabla V$, by \cite{Cattiaux2008}. This is the case of Theorem \ref{th3_1}.
\item If $q_m$ belongs to $C^{1,\alpha}(\overline{U_m})$ with $\alpha>0$, by \cite{Gong1988}.
\end{enumerate}
\end{remarque}

\begin{proof}[Proof of Proposition \ref{proApproximation_1}]
We set
\begin{eqnarray*}
\nu^{m,N}(t,dx)=\left(\frac{N-1}{N}\right)^{A^N_t}\mu^{m,N}(t,dx),
\end{eqnarray*}
where $A^N_t=\sum_{n=1}^{\infty}\mathds{1}_{\tau_n\leq t}$ denotes the number of jumps before time $t$. Intuitively,
we introduce a loss of $1/N$ of the total mass at each jump, in order to approximate the distribution of the diffusion process \eqref{ceq3} without conditioning. We will come back to the study of $\mu^{m,N}$ and the conditioned diffusion process by normalizing $\nu^{m,N}$.

 From the It\^o's formula applied to the semi-martingale $\mu^{m,N}(t,\psi)=\frac{1}{N}\sum_{i=1}^{N}\psi(X^{m,i}_t)$, where $\psi\in C^2(U_m,\mathds{R})$, we get
\begin{multline}
\label{th4eq2}
 \mu^{m,N}(t,\psi)=\mu^{m,N}(0,\psi)+\int_0^t{\mu^{m,N}(s,{\cal L}^{m}\psi)ds}+{\cal M}^{c,N}(t,\psi)+{\cal M}^{j,N}(t,\psi)\\+\frac{1}{N-1}\sum_{0\leq \tau_n\leq t}{\mu^{m,N}(\tau_n\minus,\psi)},
\end{multline}
where ${\cal M}^{c,N}(t,\psi)$ is the continuous martingale 
\begin{equation*}
\frac{1}{N}\sum_{i=1}^N{\sum_{j=1}^d{\int_0^t{\frac{\partial\psi}{\partial x_j}(X^{m,i}_{s})}dB^{i,j}_s}}
\end{equation*}
and ${\cal M}^{j,N}(t,\psi)$ is the pure jump martingale
\begin{equation*}
 \frac{1}{N}\sum_{i=1}^N{\sum_{0\leq \tau^i_n\leq t}{\left(\psi(X^{m,i}_{\tau^i_n})-\frac{N}{N-1}\mu^{m,N}(\tau^i_n\minus,\psi)\right)}}.
\end{equation*}
Applying the It\^o's formula to the semi-martingale $\nu^{m,N}(t,\psi)$, we deduce from \eqref{th4eq2} that
\begin{align*}
 \nu^{m,N}(t,\psi)=\nu^{m,N}(0,\psi)+\int_0^t{\nu^{m,N}(s,{\cal L}^{m}\psi)ds}&+\int_0^t{\left(\frac{N-1}{N}\right)^{A^N_{s}}d{\cal M}^{c,N}(s,\psi)}\\&+\sum_{0\leq\tau_n\leq t}(\nu^{m,N}(\tau_n,\psi)-\nu^{m,N}(\tau_n\minus,\psi)).
\end{align*}
Where we have
\begin{multline*}
 \nu^{m,N}(\tau_n,\psi)-\nu^{m,N}(\tau_n\minus,\psi)=\left(\frac{N-1}{N}\right)^{A^N_{\tau_n}}\left(\mu^{m,N}(\tau_n,\psi)-\mu^{m,N}(\tau_n\minus,\psi)\right)\\
+\mu^{m,N}(\tau_n\minus,\psi)\left(\left(\frac{N-1}{N}\right)^{A^N_{\tau_n}}-\left(\frac{N-1}{N}\right)^{A^N_{\tau_n\minus}}\right).
\end{multline*}
But
\begin{eqnarray*}
 \label{eqd12}
 \mu^{m,N}(\tau_n,\psi)-\mu^{m,N}(\tau_n\minus,\psi)=\frac{1}{N-1}\mu^{m,N}(\tau_n\minus,\psi)+{\cal M}^{j,N}(\tau_n,\psi)-{\cal M}^{j,N}(\tau_n\minus,\psi)
\end{eqnarray*}
and
\begin{eqnarray*}
 \label{eqd10}
 \left(\frac{N-1}{N}\right)^{A^N_{\tau_n}}-\left(\frac{N-1}{N}\right)^{A^N_{\tau_n\minus}}=-\frac{1}{N-1}\left(\frac{N-1}{N}\right)^{A^N_{\tau_n}}.
\end{eqnarray*}
Then
\begin{eqnarray*}
 \nu^{m,N}(\tau_n,\psi)-\nu^{m,N}(\tau_n\minus,\psi)&=&\left(\frac{N-1}{N}\right)^{A^N_{\tau_n}}\left({\cal M}^{j,N}(\tau_n,\psi)-{\cal M}^{j,N}(\tau_n\minus,\psi) \right).\\
    &=&\frac{N-1}{N}\left(\frac{N-1}{N}\right)^{A^N_{\tau_n\minus}}\left({\cal M}^{j,N}(\tau_n,\psi)-{\cal M}^{j,N}(\tau_n\minus,\psi) \right).
\end{eqnarray*}
That implies
\begin{multline*}
 \nu^{m,N}(t,\psi)-\nu^{m,N}(0,\psi)=\int_0^t{\nu^{m,N}(s,{\cal L}^{m}\psi)}ds+\int_0^t{\left(\frac{N-1}{N}\right)^{A^N_{s}}d{\cal M}^{c,N}(s,\psi)}\\
+\frac{N-1}{N}\sum_{0\leq\tau_n\leq t}{\left(\frac{N-1}{N}\right)^{A^N_{\tau_n\minus}}\left({\cal M}^{j,N}(\tau_n,\psi)-{\cal M}^{j,N}(\tau_n\minus,\psi)\right)}.
\end{multline*}
It yields that,
for all smooth functions $\Psi(t,x)$ vanishing at the boundary of $U_m$,
\begin{equation*}
\begin{split}
\nu^{m,N}(t,\Psi(t,.))-\nu^{m,N}(0,\Psi(0,.))=\int_0^t{\nu^{m,N}(s, \frac{\partial \Psi(s,.)}{\partial s}+{\cal L}^{m}\Psi(s,.))}ds\\
+{\cal N}^{c,N}(t,\Psi)+{\cal N}^{j,N}(t,\Psi),
\end{split}
\end{equation*}
where ${\cal N}^{c,N}(t,\Psi)$ is the continuous martingale
\begin{equation*}
 \frac{1}{N}\sum_{i=1}^N{\sum_{j=1}^d{\int_0^t{\left(\frac{N-1}{N}\right)^{A^N_{s}}\frac{\partial\Psi}{\partial x_j}(s,X^{m,i}_{s})}dB^{i,j}_s}}
\end{equation*}
and ${\cal N}^{j,N}(t,\Psi)$ is the pure jump martingale
\begin{equation*}
 \frac{1}{N}\sum_{i=1}^N{\sum_{0\leq \tau^i_n\leq t}{\left(\frac{N-1}{N}\right)^{A^N_{\tau^i_n\minus}}\left(\Psi(\tau^i_n,X^{i}_{\tau^i_n})-\frac{N}{N-1}\mu^{m,N}(\tau^i_n\minus,\Psi(\tau^i_n\minus,.))\right)}}.
\end{equation*}
Let $T>0$ be fixed. For all $\delta>0$, define $\Psi^{\delta}(t,x)=P^{m}_{T-t}P^{m}_{\delta}f(x)$, where $f\in C^{2}(\overline{U_m})$ and $(P^{m}_s)_{s\geq 0}$ is the semigroup associated with $\mathds{P}^{m}$ : $P^{m}_s f(x)=E_x(f(X^{U_m}_s))$. Then $\Psi^{\delta}$ vanishes on the boundary, is smooth, and fulfills
\begin{eqnarray*}
 \frac{\partial}{\partial s}\Psi^{\delta}(s,x)+\frac{1}{2}\Delta\Psi^{\delta}(s,x)+q_m(x)\nabla\Psi^{\delta}(s,x)=0,
\end{eqnarray*}
thanks to Kolmogorov's equation (see \cite[Proposition 1.5 p.9]{Ethier1986}).
It yields that
\begin{equation}
\label{eq20}
\nu^{m,N}(t,\Psi^{\delta}(t,.))-\nu^{m,N}(0,\Psi^{\delta}(0,.))={\cal N}^{c,N}(t,\Psi^{\delta})+{\cal N}^{j,N}(t,\Psi^{\delta}).
\end{equation}
Since $\left(\frac{N-1}{N}\right)^{A^N_{s}}\leq 1$ a.s., we get
\begin{equation}
\label{eqs1}
\begin{split}
E\left({\cal N}^{c,N}(T,\Psi^{\delta})^2\right)&\leq \frac{T}{N}\|\nabla \Psi^{\delta}\|_{\infty}^2\\
                                                      &\leq \frac{T}{N} \frac{c_{m}}{\sqrt{(T-t+\delta)\wedge 1}} \|f\|_{\infty}^2
\end{split}
\end{equation}
where $c_{m}>0$ is a positive constant. The last inequality comes from \cite[Theorem 4.5]{Priola2006} on gradient estimates in regular domains of $\mathds{R}^d$.
The jumps of the martingale ${\cal M}^{j,N}(t,\Psi^{\delta})$ are smaller than $\frac{2}{N}\|\Psi^{\delta}\|_{\infty}$, then
\begin{align*}
 E\Bigg[\sum_{0\leq \tau_n \leq T}{\left(\frac{N-1}{N}\right)^{2 A_{\tau_n\minus}}}&{\left({\cal M}^{j,N}(\tau_n,\Psi^{\delta}(\tau_n,.))-{\cal M}^{j,N}(\tau_n\minus,\Psi^{\delta}(\tau_n\minus,.))\right)^2}\Bigg]\\
&\leq \frac{4}{N^2}\|\Psi^{\delta}\|_{\infty}^2 E\left[\sum_{0\leq \tau_n \leq T}{\left(\frac{N-1}{N}\right)^{2 A_{\tau_n\minus}}}\right]\\
&\leq \frac{4}{N}\|\Psi^{\delta}\|_{\infty}^2 .
\end{align*}
Then
\begin{equation}
\label{eqs2}
 E\left({\cal N}^{j,N}(\Psi,T)^2\right)\leq \frac{4}{N}\|\Psi\|_{\infty}^2 \leq \frac{4}{N}\|f\|_{\infty}^2.
\end{equation}
Taking $t=T$ and $\delta=\frac{1}{N}$, we get from \eqref{eq20}, \eqref{eqs1} and \eqref{eqs2} that
\begin{eqnarray*}
\sqrt{E\left(\left| \nu^{m,N}(t,P^{m}_{\frac{1}{N}} f)- \nu^{m,N}(0, P^{m}_{T+\frac{1}{N}} f )\right|^2\right)}\leq \sqrt{\frac{c_m T+4}{\sqrt{N}}}\|f\|_{\infty}.
\end{eqnarray*}
Assume that $f$ vanishes at $\partial U_m$, so that $f$ belongs to the domain of ${\cal L}^m$. Then $\|P^{m}_{\frac{1}{N}}f - f\|_{\infty}\leq \frac{1}{N}\|{\cal L}^m f\|_{\infty}$ and we have
\begin{equation}
 \label{eq22}
\sqrt{ E\left(\left|\nu^{m,N}(T,f)-\nu^{m,N}(0, P^{m}_{T} f) \right|^2\right)}\leq \sqrt{\frac{c_m T+4}{\sqrt{N}}}\|f\|_{\infty}+\frac{2}{N}\|{\cal L}^m f\|_{\infty}\xrightarrow{N\rightarrow\infty} 0.
\end{equation}
By assumption, the family of random probabilities $(\nu^{m,N}(0,.))_{N\geq 2}=(\mu^{m,N}(0,.))_{N\geq 2}$ converges to $\mu_m$. We deduce from \eqref{eq22} that
\begin{equation}
\label{s21_e3}
 E\left(\nu^{m,N}(T,f)\right)\xrightarrow[N\rightarrow\infty]{}E\left({\mu_m}^{}(P^{m}_T f)\right),
\end{equation}
for all $f\in C^{2}(U_m)$ vanishing at boundary.
But the family $\left(\nu^{m,N}(T,.)\right)_{N\geq 2}$ is uniformly tight by Theorem \ref{thB} . It yields from \eqref{s21_e3} that its unique limiting distribution is $\mu_m(P^m_T .)$. In particular, 
\begin{equation*}
\left(\nu^{m,N}(T,U_m),\nu^{m,N}(T,.)\right)\xrightarrow[N\rightarrow\infty]{law}\left(\mu_m(P^{m}_T \mathds{1}_{U_m}),\mu_m(P^{m}_T .)\right). 
\end{equation*}
But $\mu_m^{}(P^{m}_T \mathds{1}_{U_m})$ never vanishes almost surely, so that
\begin{equation}
\label{ce1}
 \mu^{m,N}(T,.)=\frac{\nu^{m,N}(T,.)}{\nu^{m,N}(T,U_m)}\xrightarrow[N\rightarrow\infty]{law}\frac{\mu_m(P^{m}_T .)}{{\mu_m(P^{m}_T \mathds{1}_{U_m})}}=\mathds{P}^{m}_{\mu_m}(X^{U_m}_T\in . | X^{U_m}_T\in U_m).
\end{equation}

The family of random probabilities  $({\cal X}^{m,N})_{N\geq 0}$ is uniformly tight, by Theorem \ref{thB}. Let ${\cal X}^{m}$ be one of its limiting probabilities. By definition, there exists a strictly increasing map $\varphi:\mathds{N}\mapsto \mathds{N}$, such that ${\cal X}^{m,\varphi(N)}$ converges in distribution to ${\cal X}^{m}$ when $N\rightarrow\infty$. By stationarity, ${\cal X}^{m,\varphi(N)}$ has the same law as $\mu^{m,\varphi(N)}(T,.)$, which converges in distribution to $\mathds{P}^{m}_{\cal X}(X^{U_m}_T\in . | X^{U_m}_T\in U_m)$, thanks to \eqref{ce1}. But $\mathds{P}^{m}_{{\cal X}^{m}}(X^{U_m}_T\in . | X^{U_m}_T\in U_m)$ converges almost surely to $\nu_{m}$ when $T\rightarrow\infty$, by \eqref{s21_e6}. We deduce from this that ${\cal X}^{m}$ has the same law as $\nu_{m}$. As a consequence, the unique limiting probability of the uniformly tight family $({\cal X}^{m})_N$ is $\nu_{m}$, which allows us to conclude the proof of Proposition \ref{proApproximation_1}.
\end{proof}

\subsection{Convergence of the family $(\nu_{m})_{m\geq 0}$}
\label{partApproximation_2}

\begin{proposition}
\label{pr2}
 Assume that Hypothesis \ref{H3} is fulfilled and that $q_m=\nabla V \mathds{1}_{\overline{U_m}}$. Then the sequence $(\nu_{m})_{m\geq 0}$ converges weakly to the Yaglom limit $\nu_{\infty}$ when $m\rightarrow\infty$.
\end{proposition}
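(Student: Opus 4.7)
\emph{Proof plan.} The plan is a compactness-uniqueness argument in three steps.

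\emph{Step 1: uniform tightness of $(\nu_m)_{m\geq 0}$ in $U_\infty$.} The diffusion with generator $\mathcal{L}^\infty=\tfrac{1}{2}\Delta-\nabla V\cdot\nabla$ is reversible with respect to $e^{-2V(x)}dx$, hence for each $m\geq 0$ the Yaglom limit $\nu_m$ admits the closed form
\begin{equation*}
\nu_m(dx)=Z_m^{-1}\,\eta_m(x)\,e^{-2V(x)}\,dx,
\end{equation*}
where $\eta_m>0$ is the principal Dirichlet eigenfunction of $\mathcal{L}^m$ on $U_m$ with eigenvalue $-\lambda_m$ and $Z_m$ is a normalizing constant. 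The ground-state conjugation $\psi_m=e^{-V}\eta_m$ converts the eigenvalue equation into the Schr\"odinger Dirichlet problem
\begin{equation*}
-\tfrac{1}{2}\Delta\psi_m+\tfrac{1}{2}G\,\psi_m=\lambda_m\,\psi_m\ \text{on }U_m,\qquad \psi_m=0\ \text{on }\partial U_m,
\end{equation*}
so that $\nu_m(dx)\propto\psi_m(x)e^{-V(x)}dx$. Domain monotonicity of Dirichlet principal eigenvalues gives $\lambda_m\searrow\lambda_\infty$, so $\sup_m\lambda_m<\infty$. Combining this uniform spectral bound with the confining property $\overline{G}(R)\to\infty$ (third point of Hypothesis \ref{H3}) to control escape to infinity, and with the integrability estimates of the fifth point of Hypothesis \ref{H3} to control mass near $\partial U_\infty$, one deduces
\begin{equation*}
\lim_{R\to\infty}\sup_{m\geq 0}\nu_m\bigl(\{|x|>R\}\bigr)=0,\qquad \lim_{\varepsilon\to 0}\sup_{m\geq 0}\nu_m\bigl(\{d(x,\partial U_\infty)<\varepsilon\}\bigr)=0,
\end{equation*}
which together yield the uniform tightness of $(\nu_m)_{m\geq 0}$ as probability measures on $U_\infty$.

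\emph{Step 2: any subsequential weak limit is a QSD for $\mathds{P}^\infty$.} Let $\nu^\star$ be a weak limit of some subsequence $(\nu_{m_k})$; by Step 1, $\nu^\star\in\mathcal{M}_1(U_\infty)$. The QSD identity for $\nu_{m_k}$ reads $\nu_{m_k}(P^{m_k}_t f)=e^{-\lambda_{m_k}t}\nu_{m_k}(f)$. Fix $f\in C_c(U_\infty)$. Under $\mathds{P}^\infty_x$, the first exit time $T_k$ from $U_{m_k}$ satisfies $T_k\uparrow\tau_\partial$ almost surely; a pathwise coupling (the two laws coincide up to time $T_k$) gives $|P^{m_k}_t f(x)-P^\infty_t f(x)|\leq\|f\|_\infty\mathds{P}^\infty_x(T_k\leq t)\to 0$, locally uniformly in $x$. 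Combined with the weak convergence of $\nu_{m_k}$, the tightness from Step 1, and $\lambda_{m_k}\to\lambda_\infty$, passage to the limit yields $\nu^\star(P^\infty_t f)=e^{-\lambda_\infty t}\nu^\star(f)$ for every $f\in C_c(U_\infty)$. A standard extension to bounded Borel $f$ identifies $\nu^\star$ as a QSD for $\mathds{P}^\infty$ with absorption rate $\lambda_\infty$.

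\emph{Step 3: uniqueness via the Yaglom property.} The QSD identity forces $\mathds{P}^\infty_{\nu^\star}(X^\infty_t\in\cdot\mid t<\tau_\partial)=\nu^\star$ for every $t\geq 0$. Letting $t\to\infty$ and invoking the Yaglom convergence \eqref{s21_e2} guaranteed by Hypothesis \ref{H3} (equivalently, using that the principal eigenvalue $-\lambda_\infty$ of $\mathcal{L}^\infty$ is simple, so that $\nu_\infty$ is the unique QSD with absorption rate $\lambda_\infty$), we conclude $\nu^\star=\nu_\infty$. Since every weak limit of the tight family $(\nu_m)_{m\geq 0}$ coincides with $\nu_\infty$, the full sequence converges weakly to $\nu_\infty$.

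\emph{Main obstacle.} The genuinely delicate step is Step 1: because $U_\infty$ may be unbounded and $\partial U_\infty$ irregular, both escape to infinity and accumulation near the boundary must be controlled uniformly in $m$. This requires quantitative estimates on the Schr\"odinger eigenfunctions $\psi_m$ that simultaneously exploit the confining behaviour of $G$, the integrability conditions on $e^{-2V}$ near $\partial U_\infty$, and the uniform spectral bound $\lambda_m\leq\lambda_0$.
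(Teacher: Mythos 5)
Your overall strategy (ground-state transform, domain monotonicity giving $\lambda_m\searrow\lambda_\infty$, tightness extracted from Hypothesis \ref{H3}, identification of any limit point via simplicity of the principal eigenvalue) is the same compactness--uniqueness scheme as the paper's, and your Steps 2 and 3 are workable variants of the paper's identification step (the paper passes to the limit in the weak eigenvalue identity $\langle\eta_m,{\cal L}^{m}\phi\rangle_{\sigma,m}=-\lambda_m\langle\eta_m,\phi\rangle_{\sigma,m}$ and invokes elliptic regularity, rather than the semigroup/QSD identity, but the two are interchangeable here). The problem is Step 1: as written, ``one deduces'' is doing all the work, and it conceals the genuinely hard point of the entire proof.

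The estimates furnished by point 5 of Hypothesis \ref{H3} (via \cite[Proposition B.6]{Cattiaux2008} and $\kappa=\sup_m\|\eta_m e^{-V}\|_\infty<\infty$) give upper bounds on the tails of the \emph{unnormalized} measures $\eta_m\mathds{1}_{U_m}\,d\sigma$, where $\eta_m$ is normalized in $L^2(d\sigma)$, not in $L^1$. To convert these into tightness of the probability measures $\nu_m=Z_m^{-1}\eta_m\,d\sigma$ you must additionally show $\inf_{m}Z_m>0$, i.e.\ that the $L^1(d\sigma)$ mass of $\eta_m$ does not degenerate as $m\to\infty$; in the paper's formulation this is exactly the statement that the limit $\beta\eta_\infty\,d\sigma$ of a subsequence has $\beta>0$. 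This is not a formality: the $L^2$ normalization alone is compatible with the mass of $\eta_m$ concentrating near $\partial U_m$ or escaping to infinity, which would annihilate the $L^1$ mass on every fixed compact set and make the normalized limits degenerate. The paper rules this out by a separate quantitative argument: Green's formula applied to the Schr\"odinger form yields $\langle\eta_m,G\eta_m\rangle_{\sigma,m}\le\lambda_1$ uniformly in $m$, hence $\langle\eta_m,\mathds{1}_{|x|\ge R}\eta_m\rangle_{\sigma,m}\le\lambda_1/\overline{G}(R)$, and combining this with the pointwise bound $\eta_m\le\kappa e^{V}$ on the bounded remainder forces $\langle\eta_m,\mathds{1}_{K}\eta_m\rangle_{\sigma,m}\ge 1/2$ for a fixed compact $K$, which in turn gives a uniform lower bound on $\int_K\eta_m\,d\sigma$ and hence on $Z_m$. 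Your plan names the relevant ingredients ($\overline{G}(R)\to\infty$, the uniform spectral bound) but never states this lower-bound step; without it the two displayed limits of your Step 1 --- and therefore Steps 2 and 3, which need $\nu^\star$ to be a probability measure --- do not follow.
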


\begin{remarque}\upshape
Since $q_m=\nabla V \mathds{1}_{\overline{U_m}}$, the operator ${\cal L}^m$ is symmetric with respect to the measure $e^{-2V(x)}dx$, but this isn't directly used in the proof of Proposition \ref{pr2}. We mainly use inequalities from \cite{Cattiaux2008} that are implied by the ultra-contractivity of $\mathds{P}^{\infty}$ and the third point of Hypothesis \ref{H3}. However, it seems hard to generalize this last hypothesis and its implications to diffusions with non-gradient drifts.
\end{remarque}

\begin{proof}[Proof of Proposition \ref{pr2}]
For all $m\geq 0$ and $m=\infty$, it has been proved in \cite{Cattiaux2008} that $-{\cal L}^{m*}$ has a simple eigenvalue $\lambda_{m}>0$ with minimal real part, where ${\cal L}^{m*}$ is the adjoint operator of ${\cal L}^m$. The corresponding normalized eigenfunction $\eta_{m}$ is strictly positive on $U_{m}$, belongs to $C^2(U_{m},\mathds{R})$ and fulfills  
\begin{equation}
\label{eq23}
{\cal L}^{m*}\eta_{m}=-\lambda_{m}\eta_{m}\ \text{and}\ \int_{U_{m}}{\eta_{m}(x)^2d\sigma(x)}=1,
\end{equation}
where
\begin{equation*}
d\sigma(x)=e^{-2V(x)}dx.
\end{equation*}
The Yaglom limit $\nu_{m}$ is given by
\begin{equation*}
 d\nu_{m}=\frac{\eta_{m}\mathds{1}_{U_m} d\sigma}{\int_{U_{m}}{\eta_{m}(x) d\sigma(x)}},\ \forall m\geq 0\ \text{or}\ m=\infty.
\end{equation*}
In order to prove that $(\nu_{m})_{m\geq 0}$ converges to $\nu_{\infty}$, we show that $(\lambda_m)_{m\geq 0}$ converges to $\lambda_{\infty}$. Then we prove that $\left(\eta_{m}\mathds{1}_{U_m} d\sigma\right)_{m\geq 0}$ is uniformly tight. We conclude by proving that every limiting point $\eta d\sigma$ is a nonzero measure proportional to $\eta_{\infty} d\sigma$.

For all $m\geq 0$ or $m=\infty$, the eigenvalue $\lambda_{m}$ of $-{\cal L}^{m*}$ is given by (see for instance \cite[chapter XI, part 8]{Yosida1968})
\begin{equation*}
 \lambda_{m}=\inf_{\phi\in C^{\infty}_0\left(U_m\right),\ \left\langle\phi,\phi\right\rangle_{\sigma,m}=1}{\left\langle {\cal L}^{m*}\phi,\phi\right\rangle_{\sigma,m}}.
\end{equation*}
where $C^{\infty}_0\left(U_m\right)$ is the vector space of infinitely differentiable functions with compact support in $U_m$ and  $\left\langle f,g \right\rangle_{\sigma,m}=\int_{U_{m}} {f(u)g(u)d\sigma(u)}$. For all $\phi\in C^{\infty}_0\left(U_{\infty}\right)$, the support of $\phi$ belongs to $U_m$ for $m$ big enough, then $C^{\infty}_0\left(U_{\infty}\right)=\bigcup_{m\geq 0}C^{\infty}_0\left(U_{m}\right)$ since the reverse inclusion is clear. Moreover, if $\phi\in C^{\infty}_0\left(U_{m}\right)$, then ${\cal L}^{\infty*}\phi(x)={\cal L}^{m*}\phi(x)$ for all $x\in U_m$. Finally,
\begin{align*}
 \lambda_{\infty}&=\inf_{m\geq 0}\inf_{\phi\in C^{\infty}_0\left(U_m\right),\ \left\langle\phi,\phi\right\rangle_{\sigma,m}=1}\left\langle {\cal L}^{m*}\phi,\phi\right\rangle_{\sigma,m}\\
                 &=\lim_{m\geq 0}\searrow \lambda_{m}.
\end{align*}

Let us show that the family $(\eta_{m}\mathds{1}_{U_m}d\sigma)_{m\geq 0}$ is uniformly tight.
Fix an arbitrary positive constant $\epsilon>0$ and let us prove that one can find a compact set $K_{\epsilon}\subset U_{\infty}$ which fulfills
\begin{equation}
\label{s21_e11}
 \int_{{U_{\infty}}{\setminus K_{\epsilon}}}{\epsilon_{m}\mathds{1}_{U_m}d\sigma}\leq \epsilon,\ \forall m\geq 0.
\end{equation}
Let $R_0$ be the positive constant of the fifth part of Hypothesis \ref{H3}. For all compact set $K$, we have
\begin{equation*}
 \int_{U_{\infty}\setminus K}{\eta_{m}\mathds{1}_{U_m}d\sigma}=\int_{\{d(x,\partial U_{m}) > R_0\}\cap U_m\setminus K}{\eta_{m}d\sigma}+\int_{\{d(x,\partial U_{m}) \leq R_0\}\cap U_m\setminus K}{\eta_{m}d\sigma}.
\end{equation*}
From the proof of \cite[Proposition B.6]{Cattiaux2008}, we have on the one hand
\begin{align*}
 \int_{\{d(x,\partial U_{m}) > R_0\}\cap U_m\setminus K}{\eta_{m}d\sigma}&\leq \sqrt{\int_{\{d(x,\partial U_{\infty}) > R_0\}\cap U_{\infty}\setminus K}{e^{-2V(x)}dx}},
\end{align*}
which is smaller than $\epsilon/2$ for a good choice of $K$, say $K'_{\epsilon}$, since the integral at the right-hand side is finite by Hypothesis \ref{H3}.
On the other hand
\begin{align}
\label{s21_e10}
 \int_{\{d(x,\partial U_{m}) \leq R_0\}\cap U_m \setminus K}{\eta_{m}d\sigma}&\leq
             e^{C/2} e^{\lambda_{m}} \kappa
               \int_{\{d(x,\partial U_{\infty}) \leq R_0\}\cap U_{\infty}\setminus K}{ \left( \int_{U_{\infty}}{p_1^{U_{\infty}}(x,y)}dy \right) dx},
\end{align}
where $\kappa=\sup_{m\geq 0}\|\eta_me^{-V}\|_{\infty}<\infty$ thanks to \cite{Cattiaux2008}, and $\lambda_m\leq \lambda_{\infty}$ for all $m\geq 0$. But the integral on the right-hand side is well defined by Hypothesis \ref{H3}, then one can find a compact set $K''_{\epsilon}$ such that \eqref{s21_e10} is bounded by $\epsilon/2$. 
We set $K_{\epsilon}=K'_{\epsilon}\cup K''_{\epsilon}$ so that \eqref{s21_e11} is fulfilled. Since inequality \eqref{s21_e11} occurs for all $\epsilon>0$, the family $(\eta_{m}d\sigma)_{m\geq 0}$ is uniformly tight. Moreover, $\eta_{m}d\sigma$ has a density with respect to the Lebesgue measure, which is bounded by $\kappa e^{-V}$, uniformly in $m\geq 0$. Then it is uniformly bounded on every compact set, so that every limiting distribution is absolutely continuous with respect to the Lebesgue measure.

Let $\eta d\sigma$ be a limiting measure of $(\eta_{m}d\sigma)_{m\geq 0}$. For all $\phi\in  C_0^{\infty}(U_{\infty},\mathds{R})$, the support of $\phi$ belongs to $U_m$ for $m$ big enough, then
\begin{align*}
\left\langle \eta, {\cal L}^{\infty}\phi \right\rangle_{\sigma,\infty}&=\lim_{m\rightarrow\infty} \left\langle \eta_{m}, {\cal L}^{m}\phi \right\rangle_{\sigma,m}\\
                                             &=\lim_{m\rightarrow\infty} \left\langle {\cal L}^{m*} \eta_{m}, \phi \right\rangle_{\sigma,m}\\
                                             &=\lim_{m\rightarrow\infty} -\lambda_{m}\left\langle \eta_{m}, \phi \right\rangle_{\sigma,m}\\
                                             &=-\lambda_{\infty} \left\langle \eta, \phi \right\rangle_{\sigma,\infty}.
\end{align*}
Thanks to the elliptic regularity Theorem, $\eta$ is of class $C^2$ and fulfills ${\cal L}^{\infty*} \eta=-\lambda_{\infty} \eta$. But the eigenvalue $\lambda_{\infty}$ is simple, then $\eta$ is proportional to $\eta_{\infty}$. Let $\beta\geq 0$ be the non-negative constant such that $\eta=\beta \eta_{\infty}$. In particular, there exists an increasing function $\phi:\mathds{N}\mapsto\mathds{N}$ such that 
$\eta_{\phi(m)}d\sigma$ converges weakly to $\beta\eta_{\infty}d\sigma$.

Let us prove that $\beta$ is positive. For all compact subset $K\subset U_{\infty}$, we have
\begin{align}
\beta\left\langle \eta_{\infty},e^V \mathds{1}_K\right\rangle_{\sigma,\infty}&=\lim_{m\rightarrow \infty} \left\langle \eta_{{\phi(m)}},\mathds{1}_K e^V\right\rangle_{\sigma,{\phi(m)}}\nonumber\\
                                                    &\geq \lim_{m\rightarrow \infty} \frac{1}{\kappa} \left\langle \eta_{{\phi(m)}},\mathds{1}_K\eta_{\phi(m)}\right\rangle_{\sigma,{\phi(m)}}\nonumber\\
                                                    &\geq \frac{1}{\kappa} \left(1-\sup_{m\geq 0}\left\langle \eta_{m},\mathds{1}_{U_m\setminus K}\eta_m\right\rangle_{\sigma,m}\right),\label{ceq5}
\end{align}
where $\kappa=\sup_{m\geq 0}\|\eta_me^{-V}\|_{\infty}<\infty$. For all $m\geq 0$ and all $R>0$,
\begin{align}
\label{ceq4}
\left\langle \eta_{m},\mathds{1}_{U_m\setminus K}\eta_m\right\rangle_{\sigma,m}\leq \frac{1}{\overline{G}(R)}\left\langle\eta_m,\mathds{1}_{|x|\geq R}G\eta_m \right\rangle_{\sigma,m}+\left\langle\eta_m,\mathds{1}_{\{|x|<R\}\setminus K}\eta_{m}\right\rangle_{\sigma,m},
\end{align}
where $G$ and $\overline{G}$ are defined in Hypothesis \ref{H3}.
Let us prove that $\left\langle\eta_m,G\eta_m \right\rangle_{\sigma,m}$ is uniformly bounded in $m\geq 0$.
For all $x\in U_m$, \eqref{eq23} leads to
\begin{equation*}
\frac{1}{2}G(x) \eta_m(x)=\lambda_{m} \eta_{m}(x)+ \frac{1}{2}e^{V(x)}\Delta (\eta_m e^{-V})(x).
\end{equation*}
Then
\begin{align*}
 \left\langle\eta_m,G\eta_m \right\rangle_{\sigma,m}&=\lambda_{m} \left\langle\eta_m,\eta_m \right\rangle_{\sigma,m}+ \frac{1}{2}\int_{U_m}{\eta_m(x)e^{-V(x)}\Delta (\eta_m e^{-V})(x)dx}\nonumber\\
                                                 &=\lambda_{m} - \int_{U_m}{ |\nabla \eta_{m}(x)e^{-V(x)}|^2 dx}\nonumber\\
                                                 &\leq \lambda_{1},\nonumber
\end{align*}
where the second equality is a consequence of the Green's formula (see \cite[Corollary 3.2.4]{Allaire2005}).
But $\overline{G}(R)$ goes to $+\infty$ when $R\rightarrow\infty$, then there exists $R_1>0$ such that $\frac{1}{\overline{G}(R_1)}\left\langle\eta_m,\mathds{1}_{|x|\geq R_1}G\eta_m \right\rangle_{\sigma,m}\leq \frac{1}{4}$. Since $\kappa=\sup_{m\geq 0}\|\eta_me^{-V}\|_{\infty}<\infty$, we deduce from \eqref{ceq4} that
\begin{equation*}
\left\langle \eta_{m},\mathds{1}_{U_m\setminus K}\eta_m\right\rangle_{\sigma,m}\leq \frac{1}{4}+\kappa^2 \int_{U_{\infty}}\mathds{1}_{\{|x|<R_1\}\setminus K} dx.
\end{equation*}
But one can find a compact subset $K_1\subset U_{\infty}$ such that $\int_{U_{\infty}}\mathds{1}_{\{|x|<R_1\}\setminus K_1} dx\leq \frac{1}{4\kappa^2}$, then we have from \eqref{ceq5}
\begin{align*}
\beta\left\langle \eta_0,\mathds{1}_{K}\right\rangle_{\sigma}&\geq \frac{1}{2\kappa}.
\end{align*}
It yields that $\beta>0$ and Proposition \ref{pr2} follows.
\end{proof}

\subsection{Numerical simulations}
\label{simulation}
\subsubsection{The Wright-Fisher case}

The Wright-Fisher with values in $]0,1[$ conditioned to be absorbed at $0$ is the diffusion process driven by the SDE 
\begin{equation*}
 dZ_t=\sqrt{Z_t(1-Z_t)}dB_t-Z_t dt,\ Z_0=z\in]0,1[,
\end{equation*}
and absorbed when it hits $0$ ($1$ is never reached). Huillet proved in \cite{Huillet2007} that the Yaglom limit of this process exists and has the density $2-2x$ with respect to the Lebesgue measure.
In order to apply Theorem \ref{th3_1}, we define
 $\mathds{P}^{\infty}$ as the law of $X^{\infty}_.=\arccos(1-2 Z_.)$. Then $\mathds{P}^{\infty}$ is the law of the diffusion process with values in $U_{\infty}=]0,\pi[$, driven by the SDE
\begin{equation*}
 dX^{\infty}_t=dB_t-\frac{1-2\cos X^{\infty}_t}{2\sin X^{\infty}_t } dt,\ X^{\infty}_0=x\in]0,\pi[,
\end{equation*}
absorbed when it hits $0$ ($\pi$ is never reached). One can easily check that this diffusion process fulfills Hypothesis \ref{H3}.
We denote by $\nu_{{\infty}}$ its Yaglom limit.

For all $m\geq 1$, we define $U_{m}=]\frac{1}{m},\pi-\frac{1}{m}[$. Let $\mathds{P}^{m}$ and $\nu_{m}$ be as in Section \ref{partApproximation}.
We proceed to the numerical simulation of the $N$-interacting particle system $(X^{m,1},...,X^{m,N})$ with $m=1000$ and $N=1000$. This leads us to the computation of $E({\cal X}^{m,N})$, which is an approximation of $\nu_{\infty}$. After the change of variable $Z.=2\cos(X.)$, we see on Figure \ref{fig4} that the simulation is very close to the expected result $(2-2x)dx$, which shows the efficiency of the method.

\begin{figure}[htbp]
\begin{center}
\input{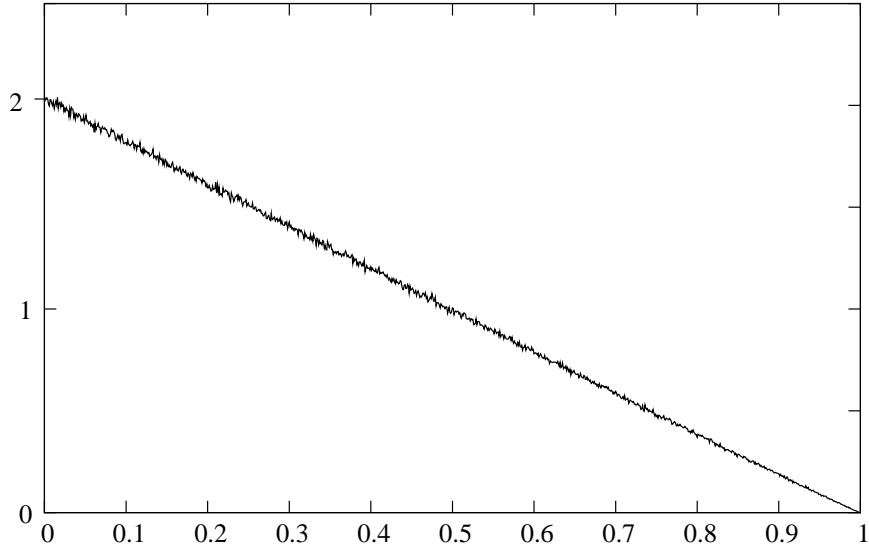}
\caption{$E({\cal X}^{m,N})$ in the Wright-Fisher case}
\label{fig4}
\end{center}
\end{figure}

\subsubsection{The logistic case}
\label{par5}
The logistic Feller diffusion with values in $]0,+\infty[$ is defined by the stochastic differential equation
\begin{equation}
 \label{eq77}
 dZ_t=\sqrt{Z_t}dB_t+(rZ_t-cZ_t^2)dt,\ Z_0=z>0,
\end{equation}
and absorbed when it hits $0$. Here $B$ is a $1$-dimensional Brownian motion and $r,c$ are two positive constants.
In order to use Theorem \ref{th3_1}, we make the change of variable $X.=2\sqrt{Z.}$. This leads us to the study of the diffusion process with values in $U_{\infty}=]0,+\infty[$, which is absorbed at $0$ and satisfies the SDE
\begin{equation*}
 dX^{\infty}_t=dB_t-\left(\frac{1}{2X^{\infty}_t}-\frac{rX^{\infty}_t}{2}+\frac{c(X^{\infty}_t)^3}{4}\right)dt,\ X^{\infty}_0=x\in]0,+\infty[.
\end{equation*}
We denote by $\mathds{P}^{\infty}$ its law. Cattiaux et al. proved in \cite{Cattiaux2009} that Hypothesis \ref{H3} is fulfilled in this case. Then
the Yaglom limit $\nu_{\infty}$ associated with $\mathds{P}^{\infty}$ exists and one can apply Theorem \ref{th3_1} with $U_m=]\frac{1}{m},m[$ for all $m\geq1$. For all $N\geq 2$, we denote by $\mathds{P}^{m}$ the law of the diffusion process restricted to $U_m$ and by ${\cal X}^{m,N}$ the empirical stationary distribution of the $N$-interacting particle process associated with $\mathds{P}^{m}$.

We've proceeded to the numerical simulation of the interacting particle process for a large number of particles and a large value of $m$. 
This allows us to compute $E({\cal X}^{m,N})$, which gives us a numerical approximation of $\nu_{\infty}$, thanks to Theorem \ref{th3_1}.

In the numerical simulations below, we set $m=10000$ and $N=10000$. We compute $E({\cal X}^{m,N})$ for different values of the parameters $r$ and $c$ in \eqref{eq77}. The results are graphically represented in Figure \ref{fig10}.
As it could be wanted for, greater is $c$, closer is the support of the QSD to $0$. We thus numerically describe the impact of the linear and quadratic terms on the Yaglom limit.

\begin{figure}[htbp]
\begin{center}
 \input{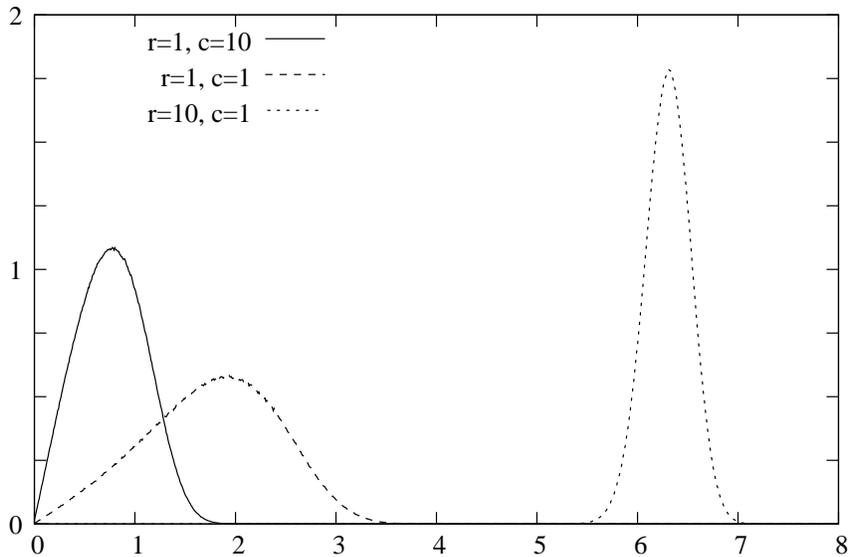}
\caption{$E({\cal X}^{m,N})$ for the diffusion process \eqref{eq77}, with different values of $r$ and $c$}
\label{fig10}
\end{center}
\end{figure}

\subsubsection{Stochastic Lotka-Volterra Model}
\label{partApproximation_3}
We apply our results to the stochastic Lotka-Volterra system with values in $D=\mathds{R}_+^2$ studied in \cite{Cattiaux2008}, which is defined by the following stochastic differential system
\begin{equation*}
 \begin{split}
  dZ^1_t=\sqrt{\gamma_1 Z_t^1}dB^1_t+\left( r_1 Z^1_t - c_{11}(Z^1_t)^2-c_{12}Z^1_t Z^2_t \right) dt,\\
  dZ^2_t=\sqrt{\gamma_2 Z_t^2}dB^2_t+\left( r_2 Z^2_t -c_{21}Z^1_t Z^2_t - c_{22}(Z^2_t)^2 \right) dt,
 \end{split}
\end{equation*}
where $(B^1,B^2)$ is a bi-dimensional Brownian motion. We are interested in the process absorbed at $\partial D$.

More precisely, we study the process $X^{\infty}=(Y^1,Y^2)=(2\sqrt{Z^1_./\gamma_1},2\sqrt{Z^2_./\gamma_2})$, with values in $U_{\infty}=\mathds{R}_+^2$, which satisfies the SDE \eqref{eqr1} and is absorbed at $\partial U_{\infty}$. We denote its law by $\mathds{P}^{\infty}$. The coefficients are supposed to satisfy
\begin{equation}
  \label{eql3}
  c_{11},c_{21}>0,\ c_{12}\gamma_2=c_{21}\gamma_1<0\ \text{and}\ c_{11}c_{22}-c_{12}c_{21}>0.
\end{equation}
In \cite{Cattiaux2008}, this case was called \emph{the weak cooperative case} and the authors proved that it is a sufficient condition for Hypothesis \ref{H3} to be fulfilled. Then the Yaglom limit
$\nu_{\infty}=\lim_{t\rightarrow+\infty}\mathds{P}^{\infty}_{x}\left(X^{\infty}\in . | t<\tau_{\partial}\right)$
is well defined and we are allowed to apply Theorem \ref{th3_1}. For each $m\geq 1$, we define $U_m$ as it is described on Figure \ref{fig2}.  With this definition, it is clear that all conditions of Theorems \ref{thA_1} and \ref{th3_1} are fulfilled.

\begin{figure}[htbp]
\begin{center}
\includegraphics[height=8cm]{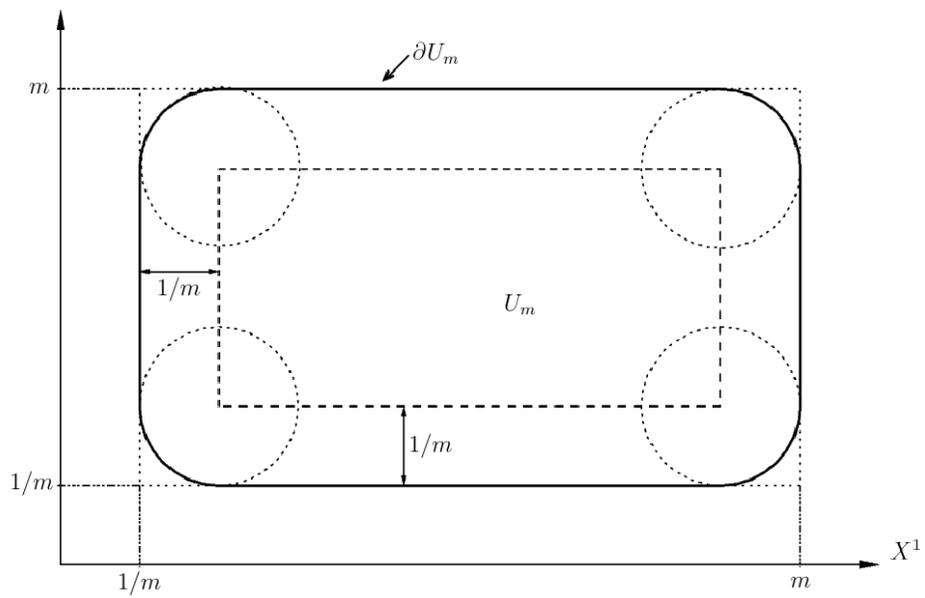}
\caption{Definition of $U_m$} 
\label{fig2}
\end{center}
\end{figure}

We choose $m=10000$ and we simulate the long time behavior of the interacting particle process with $N=10000$ particles for different values of $c_{12}$ and $c_{21}$. The values of the other parameters are $r_1=1=r_2=1,\ c_{11}=c_{22}=1,\ \gamma_1=\gamma_2=1$.
The results are illustrated on Figure \ref{figa1}.
One can observe that a greater value of the cooperating coefficients $-c_{12}=-c_{21}$ leads to a Yaglom limit whose support is further from the boundary and covers a smaller area. In other words, the more the two populations cooperate, the bigger the surviving populations are.

\begin{figure}[htbp]
\begin{center}
 \includegraphics[height=12cm]{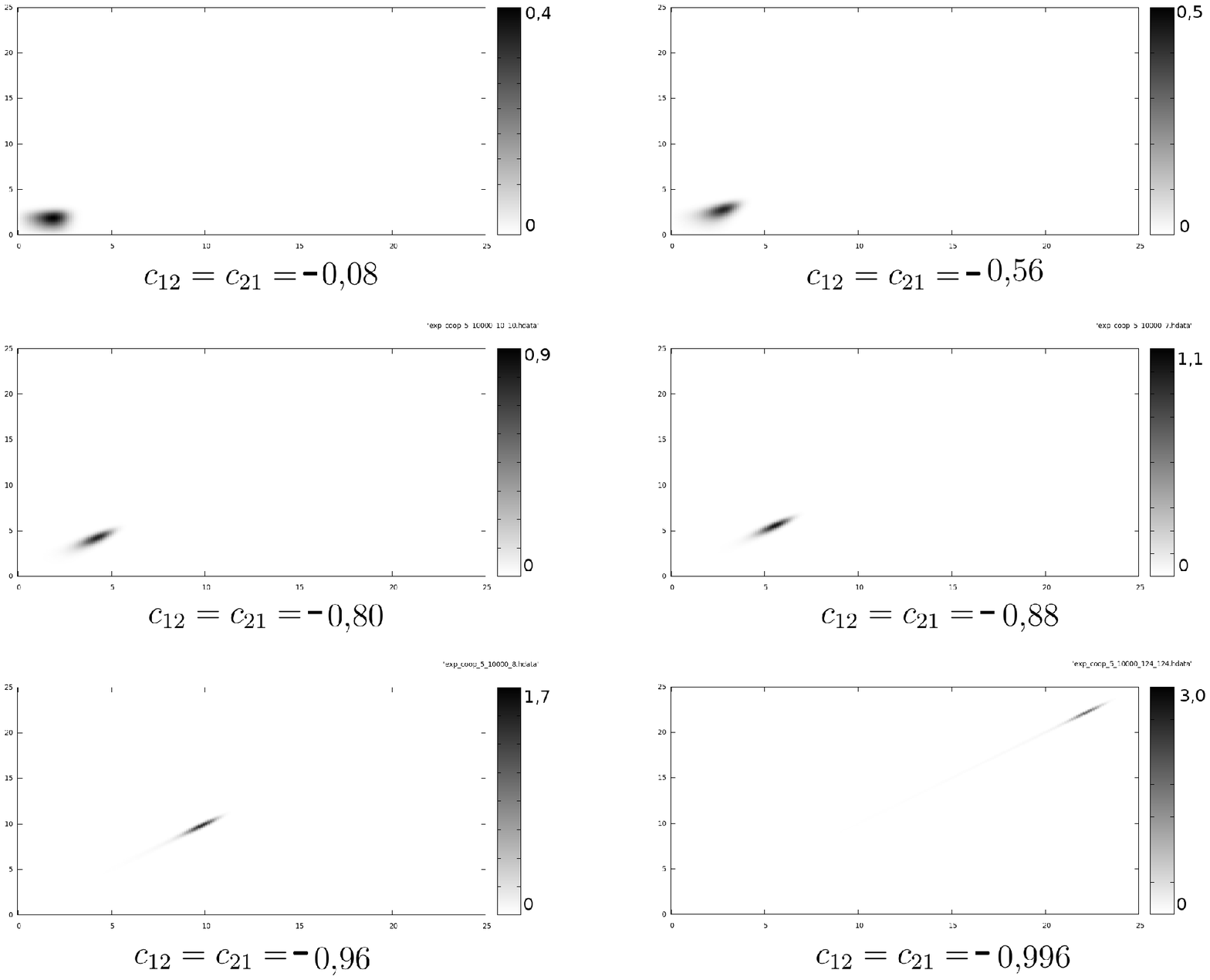}
 \caption{Empirical stationary distribution of the interacting particle process for different values of $c_{12}=c_{21}$}
 \label{figa1}
\end{center}
\end{figure}

\paragraph{Acknowledgments} I am extremely grateful to my Ph.D. supervisor Sylvie M\'el\'eard for his careful and indispensable help on the form and the content of this paper. I would like to thank Pierre Collet and my colleagues Jean-Baptiste Bellet and Khalid Jalalzai for their advices on functional analysis.

\clearpage

 \nocite{Cavender1978,Collet1995,Darroch1965,Ferrari1995,Martinez1998,Yaglom1947}

 \bibliography{./bibliography/compilation}
 \bibliographystyle{abbrv}
\end{document}